\documentclass[11pt]{amsart}
\usepackage{graphicx} 

\usepackage[english]{babel}
\usepackage{import}
\usepackage{amsmath,amsfonts,amsthm,amssymb,stmaryrd}

\usepackage[alphabetic,abbrev]{amsrefs} 
\renewcommand{\MR}[1]{}

\newcommand\jxbar{%
\draw (\thejxx*\w+0.5*\w,-\thejxy*\h) -- (\thejxx*\w+0.5*\w,-\thejxy*\h-\h);%
\stepcounter{jxx};%
}

\newcommand\jxcap{%
\draw (\thejxx*\w+0.5*\w,-\thejxy*\h-\h) to[out=90,in=90] (\thejxx*\w+1.5*\w,-\thejxy*\h-\h);%
\addtocounter{jxx}{2};%
}
\newcommand\jxcup{%
\draw (\thejxx*\w+0.5*\w,-\thejxy*\h) to[out=-90,in=-90] (\thejxx*\w+1.5*\w,-\thejxy*\h);%
\addtocounter{jxx}{2};%
}

\newcommand\jxnewline{\stepcounter{jxy}; \setcounter{jxx}{1}; }
\newcommand\jxspace[1]{\addtocounter{jxx}{#1}}

\newcommand\jxbox[2]{%
\node[anchor=north west,minimum height={\h}, minimum width={#1*\w},align=center,inner sep=0pt,draw=black,font=\small] at (\thejxx*\w,-\thejxy*\h) {#2};%
\addtocounter{jxx}{#1};
}

\newcounter{jxx}
\newcounter{jxy}

\newcommand\stringdiag[1]{
~\tikz[remember picture, looseness=2.5, baseline={([yshift=-0.5ex]current bounding box.center)}]\bgroup
\def\w{10pt}
\def\h{14pt}
\setcounter{jxx}{1}
\setcounter{jxy}{1}
#1 
\egroup~
}

\usepackage{url}
\usepackage{fancyhdr}
\usepackage{graphicx}
\usepackage{microtype}
\usepackage{wrapfig, caption}
\usepackage[hidelinks]{hyperref}
\usepackage{xcolor}
\usepackage{geometry}
\usepackage{tikz} \usetikzlibrary{cd,calc}
\usetikzlibrary{arrows}
\usetikzlibrary{decorations.markings}
\usepackage{dutchcal}

\usepackage{enumitem}
\setlist[itemize]{leftmargin=*,labelindent=5pt}

\usepackage{zref-clever} \newcommand\Cref[1]{\zcref[S]{#1}} 

\usetikzlibrary{decorations.markings}
\makeatletter
\tikzset{nomorepostaction/.code={\let\tikz@postactions\pgfutil@empty}}
\makeatother
\tikzset{
	partition/.style={
		scale=0.4,
		yscale=-1,
		baseline={([yshift=-0.5ex]current bounding box.center)}
	}
}

\newcommand\newtheoremx[2]{%
\AddToHook{env/#1/begin}{
\zcsetup{countertype={theorem=#1}}}
\newtheorem{#1}[theorem]{#2}
}

\newtheorem{theorem}{Theorem}[section]
\newtheoremx{claim}{Claim}
\newtheoremx{proposition}{Proposition}
\newtheoremx{corollary}{Corollary}
\newtheoremx{lemma}{Lemma}
\newtheoremx{conjecture}{Conjecture}

\newtheorem{introtheorem}{Theorem}

\zcsetup{countertype={introtheorem=theorem}}

\theoremstyle{definition}
\newtheoremx{definition}{Definition}
\newtheoremx{notation}{Notation}
\newtheoremx{assumption}{Assumption}

\theoremstyle{remark}
\newtheoremx{example}{Example}
\newtheoremx{remark}{Remark}
\newtheoremx{question}{Question}

\title[Abelian envelopes for interpolation categories of wreath products]{Abelian envelopes for interpolation categories of wreath products from monoidal adjunctions }

\author{Johannes Flake}
\address{Mathematical Institute, University of Bonn, Endenicher Allee 60, 53115 Bonn, Germany}
\email{flake@math.uni-bonn.de}

\author{Thorsten Heidersdorf}
\address{School of Mathematics, Statistics and Physics, Newcastle University, Newcastle, United Kingdom}
\email{thorsten.heidersdorf@newcastle.ac.uk}

\author{David Hull}
\address{Mathematical Institute, University of Bonn, Endenicher Allee 60, 53115 Bonn, Germany}
\email{hulld@tcd.ie}

\date{}

\newcommand\cA{\mathcal A}
\newcommand\cC{\mathcal C}
\newcommand\cD{\mathcal D}
\newcommand\cE{\mathcal E}

\newcommand\cT{\mathcal T}
\newcommand\cS{\mathcal S}
\newcommand\kk{\Bbbk}
\newcommand\Rep{\operatorname{Rep}}
\newcommand\Res{\operatorname{Res}}
\newcommand\uRep{\underline{\operatorname{Rep}}}
\newcommand\Id{\operatorname{Id}}
\newcommand\id{\operatorname{id}}
\newcommand\im{\operatorname{im}}
\renewcommand\:\colon
\renewcommand\o\otimes
\newcommand\one{\mathbf{1}}
\newcommand\low{\operatorname{low}}
\newcommand\ol\overline

\newcommand\Inv{\operatorname{Inv}}
\newcommand\cG{\mathcal G}
\newcommand\ZZ{\mathbb Z}

\renewcommand\o\otimes
\newcommand\cO{\mathcal O}
\newcommand\ev{\operatorname{ev}}
\newcommand\coev{\operatorname{coev}}
\newcommand\tforall{\text{for all }}
\renewcommand\:\colon
\newcommand\Ps{\operatorname{Ps}}
\newcommand\Rel{\operatorname{Rel}}
\newcommand\gr{\operatorname{gr}}

\newcommand\GSet{G\text{-}\mathrm{Set}}
\newcommand\op{^{\mathrm{op}}}

\usepackage{tikz}

\tikzset{
    partition/.style={
      scale=0.45,
      yscale=-1,
      baseline={((0,-0.35))}
    }
}

\tikzset{
    bend/.cd,
    0/.style={},
    1/.style={bend right},
    -1/.style={bend left}
}
\newcommand\makePartPt[1]{({Mod(#1,10)},{(#1-Mod(#1,10))*.1})}
\newcommand\makePartLn[2]{%
\pgfmathtruncatemacro\bend{%
(int(#1/10)==int(#2/10)) ?
(#1<10 ? 1 : -1)*(#1>#2 ? 1 : -1)
: 0%
}
\draw[draw=black,line width=0.5pt,line cap=round] ({mod(#1,10)},{int(#1/10}) to[bend/\bend] ({mod(#2,10)},{int(#2/10)});
}
\def\cmddots{dots}
\newcommand\tpx[3] {%
~\tikz[partition] {
\draw[white,opacity=0] (1,0)--(1,1); 
\def\j{0}
\foreach \i [remember=\i as \j] in {#1} {
  \ifnum \i>0
    \ifnum \j>0
      \makePartLn{\i}{\j};
    \fi
  \fi
} 
\foreach \i [remember=\i as \j] in {#1} {
  \ifnum \i>0
    \draw[draw=none,fill=black] \makePartPt\i circle (3pt);
  \fi
} 
\foreach \t [count=\x] in {#2}
{
  \ifx\t\cmddots \node[above=-2pt,font=\scriptsize,fill=white,inner xsep=0pt,inner ysep=5pt] at \makePartPt{\x} {...};
  \else \node[above=2pt,font=\scriptsize] at \makePartPt{\x} {\t};
  \fi
} 
\foreach \t [count=\x] in {#3}
{
  \ifx\t\cmddots \node[below=-2pt,font=\scriptsize,fill=white,inner xsep=0pt,inner ysep=5pt] at \makePartPt{10+\x} {...};
  \else \node[below=2pt,font=\scriptsize] at \makePartPt{10+\x} {\t};
  \fi
 } 
}~}

\newcommand\tp[1]{\tpx{#1}{}{}}

\newcommand\listorempty[1]{\def\temp{#1}\ifx\temp\empty \emptyset \else (#1) \fi}

\begin{document}

\begin{abstract} We establish the existence of abelian envelopes for interpolation categories of wreath product groups $G\wr S_n$, for a fixed finite group $G$ with the symmetric groups $S_n$, for $n\ge0$. Our approach consists of showing directly via essentially combinatorial methods that certain generalized restriction functors admit adjoints. 
\end{abstract}

\maketitle


\section{Introduction}

In modern representation theory, it has been realized that it can be very helpful to study the collection of representations of an algebraic object in its entirety, that is, to describe categories of representations including the natural structures they carry, such as tensor products. In this spirit, studying the class of tensor categories, which share sufficiently many structural features with, say, categories of representations of finite or algebraic groups, can be considered an extension of classical representation theory. It is in this realm that the family $(\Rep S_n)_{n\ge0}$ of categories of representations of the symmetric groups can be interpolated by categories $\uRep(S_t)$ due to Deligne \cite{Deligne}, where $t$ is an interpolation parameter, for instance, from the complex numbers. 

While such interpolation categories typically admit convenient descriptions based on some kind of diagrammatic calculus, they are generally not abelian categories, and the existence of a``minimal'' abelian tensor category containing a given interpolation category, the \emph{abelian envelope}, is often a challenging problem. For the Deligne's interpolation categories of the symmetric groups, this was established in \cite{CO-ab}.

While there are general existence criteria for the existence of abelian envelopes  \cites{Cou-ab,CEOP,CEAH}, they are often difficult to verify in practice. The goal of this paper is to establish the existence of abelian envelopes for interpolation categories $\cC_t$ for the family of wreath product groups $(G\wr S_n)_{n\ge0}$, for a fixed finite group $G$ and interpolation parameter $t\in\kk$, where $\kk$ is an algebraically closed field of characteristic $0$. The categories $\cC_t$ arise as special cases of Knop's tensor envelopes \cite{Knop}, and also as special cases of a general class of interpolation categories of wreath products due to Mori \cite{Mori}. Incarnations of the categories $\cC_t$ have been studied from various points of view in \cites{Repcomplexrank,Freslon-Skalski,HT, Harman1,Ryba1,Ryba2}. We follow the diagrammatic definition of the categories $\cC_t$ from \cite{LS}, which allows us to think of morphisms in $\cC_t$ as linear combinations of partition diagrams whose vertices are labeled by elements from $G$ (see \Cref{sec:G-partitions}). 

To show the existence of abelian envelopes, we utilize a recently suggested technique involving monoidal adjunctions \cite{AbEnv1}. More precisely, we consider generalized ``restriction'' functors for the interpolation categories $\cC_t$, in the following sense:

\begin{introtheorem}[\Cref{thm:res}] \label{thm:A} For all $n\in\ZZ_{\ge0}$ and $t\in\kk$, there are linear symmetric monoidal functors of the form
$$
\cC_t \to \cC_{t-n} \boxtimes \Rep(G\wr S_n)
$$
that have left adjoint functors and right adjoint functors.     
\end{introtheorem}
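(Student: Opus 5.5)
Here is the plan. I will define the functor on the diagrammatic presentation of $\cC_t$ from \cite{LS}, then build the adjoints explicitly and check the adjunction combinatorially.

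\emph{The functor.} Write $X$ for the generating object, so that $\cC_t$ is generated by tensor powers $X^{\o k}$ with $G$-labelled partition diagrams as morphisms. I will send $X$ to
$$X\boxtimes\one \;\oplus\; \one\boxtimes V,$$
where $V=\kk[G]^{\oplus n}=\kk[G\times\{1,\dots,n\}]$ is the natural permutation module of $G\wr S_n$. This mimics restriction from $G\wr S_{m+n}$ to $G\wr S_m\times G\wr S_n$, under which the permutation module $\kk[G\times\{1,\dots,m+n\}]$ splits in exactly this way.

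On generating morphisms (merge, split, unit, counit and the $G$-labels) the images are defined componentwise:
\begin{itemize}
\item the $X$-summand uses the diagram itself;
\item the $V$-summand uses the corresponding $G\wr S_n$-equivariant maps between permutation modules of $G$-labelled $n$-sets;
\item mixed components vanish.
\end{itemize}
A block of a $G$-partition of $k$ points thus decomposes as a sum over ways of assigning each block either to ``stay in $\cC_{t-n}$'' or to ``a point of $\{1,\dots,n\}$''. Consequently a partition diagram $p$ maps to a sum over functions from its blocks to $\{\ast\}\sqcup\{1,\dots,n\}$, injective on the blocks sent to points.

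One must check that composition and the loop value are preserved. The key identity is that a closed $\ast$-loop has value $t-n$, while the $n$ point-labelled loops contribute $n$, for a total of $t$. This is the standard verification, as in the symmetric group case of \cite{CO-ab} or \cite{Deligne}. Monoidality and symmetry follow because the construction is compatible with juxtaposition.

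\emph{The adjoints.} Both categories are rigid; $\cC_t$ is Karoubian but not abelian, and the target is a Deligne product, which I take to be Karoubian as well. In a rigid setting, a monoidal functor $F$ with a right adjoint also has a left adjoint, given by conjugating with duals. So it suffices to construct the right adjoint $R$. I will do this on the generators $Y\boxtimes W$ of the target, with $W$ running over the permutation modules $\kk[(G\times\{1..n\})^{j}]$, whose summands generate $\Rep(G\wr S_n)$. Set
$$R(Y\boxtimes \kk[(G\times[n])^j]) := Y\o X^{\o j},$$
roughly as induction, and then extend to direct summands.

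The unit and counit are explicit. The counit $FR\to\Id$ projects onto the component in which each of the $j$ new strands is sent to a point of $[n]$. The triangle identities reduce to the statement that
$$\mathrm{Hom}(F(X^{\o k}),\,Y\boxtimes W)\;\cong\;\mathrm{Hom}(X^{\o k},\,Y\o X^{\o j}),$$
naturally in both variables. I will prove this by comparing bases. On the left, morphisms are pairs of a partition diagram and an equivariant map, which amounts to $G$-labelled partitions together with $G\wr S_n$-orbits of labellings. On the right, morphisms are $G$-partitions between $k$ and $\text{(points of }Y)+j$.

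The main obstacle is exactly this bijection of bases. Equivariance under $G\wr S_n$ identifies labellings up to the group action; I expect these orbits to correspond to partitions in which the $j$ extra points are merged into blocks, with $G$-labels recording the relative group elements. Proving that the correspondence is natural, and not merely dimension-matching, needs a careful combinatorial argument about how composition of diagrams interacts with the orbit decomposition. To get this I would first treat the case $G$ trivial, following \cite{CO-ab}, and then add labels.
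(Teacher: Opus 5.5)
\textbf{Verdict: genuine gap.} Your candidate right adjoint is wrong for $n\ge1$, so the Hom-bijection you plan to prove is false.

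\textbf{The functor.} Your functor is the paper's $\Res=(\Id\boxtimes F)D$, which sends the generator to $[1]\boxtimes\one\oplus[0]\boxtimes W$. The paper avoids checking relations on generators by building it from mappings of invariants, \Cref{prop:functors}. There is one slip. If merges, splits, units and labels are sent componentwise, as you say, then a diagram-basis element maps to a sum over \emph{all} functions from its blocks to $\{\ast\}\sqcup\{1,\dots,n\}$. Injectivity on the point-labelled blocks is what one gets for the orbit-basis elements $x(f)$ instead.

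\textbf{The gap: the adjoint.} Your formula $R(Y\boxtimes\kk[(G\times[n])^j]):=Y\o X^{\o j}$ gives $R(\one)=\one$ (take $Y=\one$, $j=0$), and this already fails for $n\ge1$. Set $\cD:=\cC_{t-n}\boxtimes\Rep(G\wr S_n)$ and test against the generator:
$$\cD(\Res([1]),\one)\cong\cC_{t-n}([1],[0])\oplus\Rep(G\wr S_n)(W,\kk).$$
This is $2$-dimensional, because the permutation module $W$ is transitive. On the other hand, $\cC_t([1],[0])=\kk P_{1,0}$ is $1$-dimensional. So no basis comparison can establish the bijection you single out as the main obstacle.

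The finite picture shows what went wrong. Inducing the trivial module from $G\wr S_{N-n}\times G\wr S_n$ to $G\wr S_N$ gives the permutation module on $n$-element subsets of $\{1,\dots,N\}$, not the trivial module. By the projection formula, $\operatorname{Ind}(V_{N-n}^{\o a}\boxtimes V_n^{\o j})$ is a summand of an object on $n+a+j$ strands, not $a+j$. Separately, $Y\o X^{\o j}$ only makes sense for $Y=[a]$. Extending to summands of $Y$ would require transporting idempotents between the parameters $t-n$ and $t$, which you do not address.

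\textbf{The missing idea.} You only need to pin down the adjoint on the unit; rigidity and dominance then supply the rest. The paper takes $\im(e)$ for the idempotent $e=e_n\omega_nx_n\in\cC_t([n],[n])$. This symmetrizes, averages the $G$-labels on the lower points, and projects away from all proper coarsenings of the identity. It interpolates exactly the permutation module on $n$-subsets; the paper calls this object $X$, which clashes with your notation.

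The target is idempotent complete and $\Res$ is dominant. Hence \cite{AbEnv1}*{Lemma~5.5} reduces the existence of a simultaneous left and right adjoint to a single representability statement. Let $\pi'\in P_{n,0}$ have $n$ singleton components. For $f\in\cC_t([a],\im(e))$, apply $\Res$, project $\Res(\im(e))$ onto its summand $[0]\boxtimes F(\im(e))$, and compose with $\one\boxtimes F(\pi'e)$. The statement is that this map $\cC_t([a],\im(e))\to\cD(\Res([a]),\one)$ is a bijection for all $a\ge0$.

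No formula for the adjoint on general $Y\boxtimes W$ is ever needed. Your remark that a right adjoint of a monoidal functor between rigid categories yields a left adjoint via duals is correct, and is part of this mechanism.

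The paper proves the bijection by filtering by the number of components in the $\cC_{t-n}$ factor. This reduces it to \Cref{small-adjoint}. There, a spanning set $\{ef\}$ is matched with $\{F(x(\hat f))\}$ for $\hat f\in P_{a,0}$ with at most $n$ components. That set is a basis of the invariants by \Cref{thm:interpolation-kernel}.
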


The functors appearing in \Cref{thm:A} were constructed in \cite{Mori} in a more general situation. Our construction of the functors is very explicit and diagrammatic. Technically, it relies on a general idea of linear symmetric monoidal functors constructed from \emph{mappings of invariants}, which we develop in \Cref{sec:lsm-functors}. The existence of the adjoint (``induction'') functors in \Cref{thm:A}, which are only op-/lax monoidal functors, is then established using a general theory of adjoints of linear monoidal functors, as developed in \cite{AbEnv1}, and some explicit combinatorial computations involving vertex-labeled partitions. Even though interpolation versions of induction functors are discussed in \cite{Mori}, it does not seem to be considered if they are actually adjoints of the relevant generalized restriction functors (see \Cref{rem:comparison-Mori}).

The categories $\cC_t$ generalize Deligne's interpolation categories $\uRep(S_t)$ for the symmetric groups, which are obtained by specializing $G$ to the trivial group. In this special case, the above restriction and induction functors are discussed in \cites{Deligne,Repcomplexrank,HK}. For $G$ a finite cyclic group and $n=1$, \Cref{thm:A} was one of the main results in the Master thesis of one of the authors \cite{Thesis}. Our proof strategy of \Cref{thm:A}, as well as the proof in \cite{Thesis}, is based on a proof of \Cref{thm:A} (using techniques from \cite{AbEnv1}) for $G=\{1\}$ that is contained in \cite{AbEnv2} (in preparation), an early version of which was available to the authors.

Using recent general ideas on abelian envelopes from \cites{AbEnv1,FG}, we obtain from \Cref{thm:A} the following result in our situation:

\begin{introtheorem}[\Cref{thm:abenv}] For any choice of $G$ and any $t\in\kk$, where $\kk$ is algebraically closed of characteristic $0$, $\cC_t$ has an abelian envelope with enough projectives. The abelian envelope is a lower finite highest weight category in the sense of Brundan--Stroppel \cite{BS} whose category of tilting modules can be identified with $\cC_t$. 
\end{introtheorem}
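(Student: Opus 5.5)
The plan is to deduce the statement from \Cref{thm:A}, together with the general criteria of \cite{AbEnv1} and \cite{FG}.

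The framework of \cite{AbEnv1} works as follows. Suppose we have a family of linear symmetric monoidal functors $F_n\colon \cC_t \to \cD_n$ into tensor categories with enough projectives. Suppose each $F_n$ has both a left and a right adjoint. Suppose also that the family is jointly faithful in a suitable ``eventually'' sense, namely that every nonzero morphism in $\cC_t$ survives under some $F_n$. Then $\cC_t$ admits an abelian envelope. The envelope can be realized as a category of modules over $\cC_t$ whose projective objects come from the induction functors. So the first step is to choose the targets $\cD_n=\cC_{t-n}\boxtimes \Rep(G\wr S_n)$ and the functors of \Cref{thm:A}. Since $\Rep(G\wr S_n)$ is semisimple, it is natural to compose with a further functor killing the $\cC_{t-n}$ factor, or to iterate.

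The cleanest route is to apply \Cref{thm:A} for $n$ large and then use a ``lowering'' argument. The $\cC_{t-n}$ factor can itself be restricted further, so the composites into $\cC_{t-n-m}\boxtimes\Rep(G\wr S_n)\boxtimes \Rep(G\wr S_m)$ again have adjoints. The required faithfulness on a fixed Hom space should follow from the diagrammatic description. A $G$-labeled partition diagram between $r$ and $s$ points involves at most $r+s$ blocks. After restriction to $G\wr S_n$ with $n\ge r+s$, the images of distinct diagrams become linearly independent, as they do in the classical case for permutation modules $(\kk[G]^{n})^{\o r}$. Hence each Hom space embeds into a Hom space of $\Rep(G\wr S_n)$ for $n\gg0$. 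This gives the hypothesis of \cite{AbEnv1}: $\cC_t$ maps faithfully into a tensor category with enough projectives, through functors with two-sided adjoints. The conclusion is the existence of the abelian envelope with enough projectives.

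For the highest weight statement, I would invoke \cite{FG}. There, categories built from a ``triangular'' structure on a diagram category are shown to be lower finite highest weight categories whose tilting objects recover the original category. Here the triangular structure is given by the partition combinatorics: the number of blocks serves as the degree, and the Levi factors are the group algebras of the $G\wr S_k$. One must check that the abelian envelope produced above coincides with the module category of \cite{FG}, for instance by showing that both are generated by $\cC_t$ with the same universal property. The identification of tilting modules with $\cC_t$ then uses that $\cC_t$ is Karoubian and that indecomposables are indexed by the weight poset.

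I expect the main obstacle to be matching hypotheses. Specifically, one must verify that the adjoints from \Cref{thm:A} satisfy the exactness or separability conditions in \cite{AbEnv1}, uniformly in $t$. This matters especially for integer $t$, where $\cC_t$ is not semisimple. I would first try to handle this by passing to $n$ large enough that the target is as close as possible to semisimple.
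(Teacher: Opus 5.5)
There is a genuine gap at the central step. You never exhibit a functor out of $\cC_t$ into an abelian (tensor) category that has an adjoint, and that is the input the paper feeds into Theorem~B of \cite{AbEnv1}. The targets $\cC_{t-n}\boxtimes\Rep(G\wr S_n)$ of \Cref{thm:res} are a priori only pseudo-tensor categories, and neither of your remedies works.

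\begin{itemize}
\item Your faithfulness argument (``for $n\ge r+s$ distinct diagrams have linearly independent images in $\Rep(G\wr S_n)$'') is about the specialization functors $F$ of \Cref{thm:interpolation}. These exist only when the parameter equals $n$. At a fixed parameter $t$ you get just one such functor, $\cC_t\to\Rep(G\wr S_t)$, and only for $t\in\ZZ_{\ge0}$. That functor kills all negligible morphisms (\Cref{thm:interpolation-kernel}), so it is not faithful. You are conflating varying the parameter with restricting at a fixed parameter.
\item There is no symmetric monoidal way to ``kill'' the factor $\cC_{t-n}$. For $t\in\ZZ_{\ge0}$ and $n>t$, the object $[1]\in\cC_{t-n}$ has negative categorical dimension, so it cannot be sent to representations of a finite group.
\item Iterating restrictions always leaves some factor $\cC_{t-n-m}$, so it does not help either.
\end{itemize}

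The missing idea resolves exactly the obstacle you flag at the end: once $n>t$, the target is \emph{exactly} semisimple. For $t\in\ZZ_{\ge0}$, take $n=t+1$. Then $t-n=-1\notin\ZZ_{\ge0}$, so $\cC_{-1}$ is semisimple by \Cref{thm:interpolation-kernel}. Since $\kk$ is algebraically closed, $\cC_{-1}\boxtimes\Rep(G\wr S_{t+1})$ is semisimple too. The single functor $\Res_{t+1}$, with the right adjoint from \Cref{thm:res}, then suffices. The paper uses no family of functors and no separate joint-faithfulness argument. For $t\notin\ZZ_{\ge0}$, you should simply observe that $\cC_t$ is already semisimple.

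For the highest weight statement, you do not need to match the envelope with a module category by hand. Theorem~A of \cite{FG} applies to Knop tensor envelopes once their abelian envelope exists. The ingredient you are missing there is the identification of $\cC_t$ with Knop's tensor envelope of the opposite category of finite free $G$-sets (\Cref{thm:equiv-2}, from \cite{LS}).
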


An alternative route towards abelian envelopes for the categories $\cC_t$ would be via a recently developed theory of tensor categories from pro-oligomorphic groups \cite{HS}, as explained in \Cref{rem:HS}.

We conclude this paper with an appendix explaining how the categories $\cC_t$ can be realized using a diagrammatic calculus involving edge labels instead of vertex labels, and how they are obtained via Knop's construction in \cite{Knop}.

\bigskip\par\noindent\textbf{Acknowledgments.} We thank Robert Laugwitz and Sebastian Posur for agreeing to share an early version of the joint paper with J.F. \cite{AbEnv2} (in preparation) with the authors of this paper. J.F.~thanks the Max Planck Institute for Mathematics Bonn for the excellent conditions
provided while some early ideas relevant for this paper were discussed with T.H.


\section{LSM functors} \label{sec:lsm-functors}

Throughout, $\kk$ is a field of characteristic $0$.
We will abbreviate the predicates ``$\kk$-linear symmetric monoidal'' to \emph{LSM}. An LSM category is called \emph{pseudo-abelian} if it is additive and idempotent complete.
For any LSM category $\cC$, the \emph{pseudo-abelian completion}, i.e., the additive and idempotent completion, is denoted $\Ps(\cC)$. A \emph{pseudo-tensor category}, for us, is a hom-finite pseudo-abelian rigid LSM category whose tensor unit has endomorphism ring $\kk$, while a \emph{tensor category}, for us, is a pseudo-tensor category that is abelian and whose objects are all of finite length. In particular, here, all (pseudo-)tensor categories are symmetric.

We begin by establishing a technique that will allow us the construction of LSM functors and equivalences.

\begin{definition} \label{def:invariants} For any LSM category $\cC$ and any object $V\in\cC$, we define
$$
\Inv_d(V):=\cC(\one,V^{\o d}),\text{ for all }d\ge0,\quad
\Inv(V):=\bigoplus_{d\ge0} \Inv_d(V).
$$
\end{definition}

We can view $\Inv(V)$ as a $\ZZ_{\ge0}$-graded $\kk$-algebra, with multiplication given by the tensor product in $\cC$ and such that the degree-$d$ subspace $\Inv_d(V)$ is an $S_d$-module with an action given by the symmetric braiding in $\cC$.

Note that if $\cC$ is the category of representations of a group and $V$ is an object, then $\Inv(\cC)$ can be identified with the space of tensor invariants, i.e., invariants under the group action in the tensor algebra of $V$.

For any LSM category $\cC$ and any object $V\in\cC$ we denote by $\langle V\rangle_\o$ the full subcategory on tensor powers of $V$, which we view as an LSM category, as well.

\begin{definition} \label{def:mapping-invariants} A \emph{mapping of invariants} is a tuple $(V,\ev_V,\coev_V,W,\ev_W,\coev_W,F_\one)$ where $(V,\ev_V,\coev_V)$ and $(W,\ev_W,\coev_W)$ are self-dual objects in LSM categories $\cC$ and $\cD$, respectively, with (co-)evaluation morphisms as indicated, and $F_\one\:\Inv(V)\to\Inv(W)$ is a graded algebra map equivariant with respect to the symmetric group actions (in each degree) satisfying 
$$
F_\one(\coev_V)=\coev_W,
$$
$$
F_\one((\ev_V\o V^{\o(d-2)}) u)
= 
(\ev_W\o W^{\o(d-2)}) F_\one(u) 
\quad\text{for all }d\ge2, u\in\Inv_d(V).
$$  
We also denote the above tuples of data as $F_\one\:\Inv(V,\ev_V,\coev_V)\to\Inv(W,\ev_W,\coev_W)$.
\end{definition}

\begin{proposition} \label{prop:functors} Let $F_\one\:\Inv(V,\ev_V,\coev_V)\to\Inv(W,\ev_W,\coev_W)$ be a mapping of invariants, in the sense of \Cref{def:mapping-invariants}. Then there is a unique LSM functor $F\:\langle V\rangle_\o\to\cD$ such that $F(V)=W$, $F(\ev_V)=\ev_W$, and $F(u)=F_\one(u)$, for all $u\in\Inv(\cC)$. Moreover, $F$ is faithful / full if and only if $F_\one$ is injective / surjective, respectively.
\end{proposition}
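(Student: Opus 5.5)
The idea is to use self-duality to identify every hom-space of $\langle V\rangle_\o$ with a space of invariants, define $F$ on morphisms by transporting $F_\one$ through this identification, and then check functoriality and monoidality.

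\textbf{Hom-spaces as invariants.} Since $V$ is self-dual via $(\ev_V,\coev_V)$, there are natural linear isomorphisms
$$
\phi_{m,n}\:\cC(V^{\o m},V^{\o n})\xrightarrow{\ \sim\ }\Inv_{m+n}(V),
$$
obtained by "bending up" the $m$ input strands. Concretely, $f\mapsto (V^{\o m}\o f)\circ \coev_{V^{\o m}}$, where $\coev_{V^{\o m}}$ is built from $m$ nested copies of $\coev_V$. The inverse bends the strands back down using $\ev_V$, and the zigzag identities show the two maps are mutually inverse. The same construction gives isomorphisms $\psi_{m,n}$ for $W$. We then set $F(V^{\o n})=W^{\o n}$ and
$$
F(f):=\psi_{m,n}^{-1}\bigl(F_\one(\phi_{m,n}(f))\bigr).
$$
For $m=0$ this is literally $F_\one$, and for $f=\ev_V$ it gives $\ev_W$, because $\phi_{2,0}(\ev_V)$ is $\coev_V$ up to the braiding, and $F_\one$ respects both $\coev$ and the symmetric group action.

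\textbf{Functoriality and monoidality.} These are the key verifications.
\begin{itemize}
\item \emph{Tensor products.} Under $\phi$, the tensor product $f\o g$ corresponds to the product $\phi(f)\phi(g)$ followed by a permutation of tensor factors. Since $F_\one$ is multiplicative and $S_d$-equivariant, $F(f\o g)=F(f)\o F(g)$.
\item \emph{Symmetric structure.} Braidings correspond to permuted products of copies of $\coev$. Hence $F$ preserves them, and so does identities.
\item \emph{Composition (main obstacle).} Take $f\:V^{\o m}\to V^{\o n}$ and $g\:V^{\o n}\to V^{\o p}$. Then $\phi(g\circ f)$ is obtained from the product $\phi(f)\phi(g)$ by permuting factors and then contracting $n$ pairs of strands with $\ev_V$. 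Each contraction is "permute a chosen pair to the front, then apply $\ev_V\o V^{\o(d-2)}$". The second condition in \Cref{def:mapping-invariants}, together with equivariance, says $F_\one$ commutes with each such contraction step. Applying it $n$ times gives $F(g\circ f)=F(g)\circ F(f)$.
\end{itemize}
The care needed in the composition step lies in bookkeeping the permutations so that the contraction pattern on the $W$-side matches exactly.

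\textbf{Uniqueness.} Any LSM functor $F'$ with the prescribed values on $V$, $\ev_V$ and invariants must commute with the bending isomorphisms, since these are built from $\ev$, $\coev$, the braiding and tensor products. Therefore $F'$ agrees with $F$ on every morphism.

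\textbf{Faithfulness and fullness.} By construction, the action of $F$ on $\cC(V^{\o m},V^{\o n})$ is conjugate to $F_\one$ restricted to degree $m+n$. So $F$ is faithful (resp.\ full) if and only if every graded piece of $F_\one$ is injective (resp.\ surjective), which is equivalent to $F_\one$ itself being injective (resp.\ surjective).
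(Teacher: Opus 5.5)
Your proposal is correct and follows essentially the same route as the paper. The paper likewise defines $F(f)$ by bending the inputs of $f$ up with $\coev^{(m)}_V$, applying $F_\one$, and bending back down with $\ev^{(m)}_W$; it then checks composition using multiplicativity of $F_\one$, compatibility with evaluations at arbitrary positions (obtained via the braiding), and the snake relations, and reads off uniqueness and faithfulness/fullness from the bending isomorphisms. You differ only in bending to the other side and in explicitly verifying compatibility with tensor products and braidings, which the paper leaves implicit.
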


\begin{proof} For any $n\ge0$, we define the morphisms in $\cC$
$$
\ev^{(n)}_V := \ev_V (V\o \ev_V\o V) \dots (V^{\o(n-1)}\o \ev_V\o V^{\o(n-1)}) ,
$$
$$
\coev^{(n)}_V := (V^{\o(n-1)}\o \coev_V \o V^{\o(n-1)}) \dots (V\o \coev_V\o V) \coev_V ,
$$
which can be taken as co-/evaluations of the self-dual objects $V^{\o n}$. Similarly, we define morphisms $\ev_W^{(n)}$ and $\coev_W^{(n)}$ in $\cD$. 
Note that, if a functor $F$ as asserted exists, then in particular, it has to send $\coev^{(n)}_V$ to $\coev^{(n)}_W$, for all $n$.

Next we observe, that if $F$ as asserted exists, then it is compatible with post-composing evaluations even in the sense that
$$
F_\one( (V^{\o i} \o \ev_V \o V^{\o(d-2-i)}) u)
 = (W^{\o i} \o \ev_W \o W^{\o(d-2-i)})F_\one(u)
$$
for all $d\ge 2$, $0\le i\le d-2$, $u\in\Inv_d(V)$. This follows from the case $i=0$, as assumed, using that $F$ is symmetric and that $V\o \ev_V$ is related to $\ev_V\o V$ by a braiding in $\cC$.

For any $m,n\ge0$, consider $f\in\cC(V^{\o m},V^{\o n})$. We can write
$$
f = (V^{\o n} \o \ev^{(m)}_V) (f\o V^{\o m}\o V^{\o m}) (\coev^{(m)}_V\o V^{\o m}) .
$$
Hence, if a functor $F$ as asserted exists, then it has to send $f$ to
$$
y_f := (W^{\o n}\o \ev^{(m)}_W) \Big(F_\one( (f\o V^{\o m})\coev^{(m)}_V ) \o W^{\o m} \Big) .
$$
We can now verify that the assignment $f\mapsto y_f$ indeed defines a functor. First, if $f$ is the identity on the object $V^{\o n}$, then
$$
y_f = (W^{\o n}\o \ev^{(n)}_W) (\coev^{(n)}_W\o W^{\o n})
$$
is the identity on $W^{\o n}$. Also, for any $m,n,k\ge0$ and morphisms $V^{\o m}\xrightarrow{f} V^{\o n} \xrightarrow{g} V^{\o k}$, we compute
\begin{align*}
y_g y_f 
=& (W^{\o k}\o \ev^{(n)}_W) \Big(F_\one( (g\o V^{\o n})\coev^{(n)}_V ) \o W^{\o n} \Big) 
\\
&\qquad (W^{\o n}\o \ev^{(m)}_W) \Big(F_\one( (f\o V^{\o m})\coev^{(m)}_V ) \o W^{\o m} \Big) 
\\
=& (W^{\o k}\o \ev^{(n)}_W\o \ev^{(m)}_W) 
\Big(F_\one( (g\o V^{\o n}) \coev^{(n)}_V ) \o F_\one ((f\o V^{\o m}) \coev^{(m)}_V) \o W^{\o m} \Big)
\\
=& (W^{\o k}\o \ev^{(n)}_W\o \ev^{(m)}_W) 
\Big(F_\one( ((g\o V^{\o n}) \coev^{(n)}_V) \o ((f\o V^{\o m}) \coev^{(m)}_V)) \o W^{\o m} \Big)
\\
=& (W^{\o k} \o \ev^{(m)}_W) ( F_\one(
(V^{\o k}\o \ev^{(n)}_V\o V^{\o m})(g\o V^{\o n}\o f\o V^{\o m}) (\coev^{(n)}_V \o \coev^{(m)}_V)) \o W^{\o m})
\\
=& (W^{\o k} \o \ev^{(m)}_W) ( F_\one(
((gf)\o V^{\o m}) \coev^{(m)}_V) \o W^{\o m})
\\
=& y_{gf} ,
\end{align*}
see also \Cref{fig1}. We have expanded the definitions in step 1, re-written some morphisms in $\cC$ in step 2, used that $F_\one$ is an algebra morphism in step 3, used that $F_\one$ is compatible with post-composing evaluations in step 4, and used a snake relation in $\cD$ in step 5. We have shown that the functor does exist and is unique.

\begin{figure}[ht]
$$
y_g y_f = \stringdiag{ 
\jxspace{2} \jxcap \jxbar \jxnewline
\jxspace{2} \jxbox{1}{$f$} \jxbar \jxbar \jxnewline
\jxspace{2} \jxbar \jxcup \jxnewline
\jxcap \jxbar \jxnewline
\jxbox{1}{$g$} \jxbar \jxbar \jxnewline
\jxbar \jxcup
\draw[black!50] (\w*2.5,-\h) rectangle (\w*5,-\h*3.2);
\draw[black!50] (\w*0.5,-\h*4) rectangle (\w*3,-\h*6.2);
} = \stringdiag{ \def\w{15pt}
\jxcap \jxcap \jxbar \jxnewline
\jxbox{1}{$g$} \jxbar \jxbox{1}{$f$} \jxbar \jxbar \jxnewline
\jxbar \jxcup \jxcup
\draw[black!50] (\w*0.5,-\h) rectangle (\w*2.7,-\h*3.2);
\draw[black!50] (\w*2.9,-\h) rectangle (\w*5,-\h*3.2);
} = \stringdiag{ 
\jxcap \jxcap \jxbar \jxnewline
\jxbox{1}{$g$} \jxbar \jxbox{1}{$f$} \jxbar \jxbar \jxnewline
\jxbar \jxcup \jxcup
\draw[black!50] (\w*0.5,-\h) rectangle (\w*5,-\h*3.2);
} = \stringdiag{ 
\jxcap \jxcap \jxbar \jxnewline
\jxbox{1}{$g$} \jxbar \jxbox{1}{$f$} \jxbar \jxbar \jxnewline
\jxbar \jxcup \jxbar \jxbar \jxnewline
\jxbar \jxspace{2} \jxcup
\draw[black!50] (\w*0.5,-\h) rectangle (\w*5,-\h*3.8);
} = \stringdiag{ 
\jxspace{1} \jxcap \jxbar \jxnewline
\jxbox{2}{$gf$} \jxbar  \jxbar \jxnewline
\jxspace{1} \jxbar \jxcup
\draw[black!50] (\w*0.5,-\h) rectangle (\w*4,-\h*3.2);
}
= y_{gf}
$$    
\caption{Visualization of a computation using the graphical calculus of a rigid category. Here, composition is read top-to-bottom. The gray box represents the mapping $F_\one$.}
\label{fig1}
\end{figure}

The assertions on it being faithful or full follow using the isomorphisms
$$
\cC(V^{\o m},V^{\o n}) \xrightarrow{\cong} \cC(V^{\o(m+n)},\one), \quad
f\mapsto (f\o V^{\o m})\coev^{(m)}_V , \quad 
(V^{\o n}\o \ev^{(m)}_V)(g\o V^{\o n}) \mapsfrom g . 
$$
\end{proof}

To see that a map $F_\one$ as above is surjective in applications, we use the following reduction criterion. 

\begin{definition} \label{def:reduced-invariants}
In the set-up as before, define
$$
\Inv_{<d}(V):=\kk S_d\,\sum_{d_1,d_2\ge1, d_1+d_2=d} \Inv_{d_1}(V)\o\Inv_{d_2}(V)\subset\Inv_d(V)
$$
$$
\ol\Inv_d(V) := \frac{\Inv_d(V)}{\Inv_{<d}(V)} \quad
\text{for all $d\ge0$}.
$$
\end{definition}

\begin{lemma} \label{lem:reduction-surjective} A mapping of invariants $F_\one\:\Inv(V,\ev_V,\coev_V)\to\Inv(W,\ev_W,\coev_W)$ is surjective if the induced map $\Inv_d(V)\xrightarrow{F_\one}\ol\Inv_d(W)$ is surjective for all $d\ge1$.    
\end{lemma}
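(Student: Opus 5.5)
We argue by induction on the degree $d$ that $\Inv_d(W)$ is contained in the image of $F_\one$. Since $F_\one$ is a graded map, it suffices to show $F_\one(\Inv_d(V))=\Inv_d(W)$ for every $d\ge0$.

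\emph{Base case $d=0$.} Here $\Inv_0(W)=\cD(\one,\one)$. Since $F_\one$ is a graded algebra map, it sends the unit $\id_\one$ to $\id_\one$. Whenever $\cD(\one,\one)=\kk$, as in all the (pseudo-)tensor category settings of interest, this already gives surjectivity in degree $0$. In general, the hypothesis "for all $d\ge1$" should be read as also covering degree $0$, where $\Inv_{<0}=0$. We will note this convention explicitly, or assume $\operatorname{End}(\one)=\kk$.

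\emph{Inductive step.} Assume $d\ge1$ and that $F_\one$ surjects onto $\Inv_{d'}(W)$ for all $d'<d$. Take $w\in\Inv_d(W)$. By hypothesis there is $u\in\Inv_d(V)$ with $w-F_\one(u)\in\Inv_{<d}(W)$. By \Cref{def:reduced-invariants}, this difference is a finite sum of terms $\sigma\cdot(w_1\o w_2)$ with $\sigma\in S_d$, $w_i\in\Inv_{d_i}(W)$, $d_i\ge1$ and $d_1+d_2=d$. Since $d_1,d_2<d$, the induction hypothesis gives $u_i\in\Inv_{d_i}(V)$ with $F_\one(u_i)=w_i$. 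The map $F_\one$ is multiplicative, with multiplication given by $\o$, and it is $S_d$-equivariant. Hence
$$F_\one\big(\sigma\cdot(u_1\o u_2)\big)=\sigma\cdot(w_1\o w_2).$$
Summing over all terms, $w$ lies in $F_\one(\Inv_d(V))$.

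No step here is a real obstacle. The only care needed is in the degree-$0$ case just discussed, and in recording that the tensor product of invariants is exactly the algebra multiplication in $\Inv$, so that multiplicativity of $F_\one$ applies to $u_1\o u_2$.
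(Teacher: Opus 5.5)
Your proof is correct and follows the same route as the paper: induction on $d$. The hypothesis writes any $w\in\Inv_d(W)$ as $F_\one(u)$ plus an element of $\Inv_{<d}(W)$, and that element lies in the image by induction because $F_\one$ is multiplicative and $S_d$-equivariant. Your separate treatment of degree $0$ is a legitimate point that the paper's one-line proof skips: the statement as written needs surjectivity onto $\Inv_0(W)=\cD(\one,\one)$ as extra input, which is automatic when $\cD(\one,\one)=\kk$, as in all of the paper's applications.
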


\begin{proof} The assumptions mean that, for any $f\in\Inv_d(W)$, there is a $g\in\Inv_d(V)$ such that $f-F_\one(g)\in\Inv_{<d}(V)$, but $\Inv_{<d}(V)$ is contained in the image of $F_\one$ by induction in $d$.
\end{proof}

\section{Categories from group partitions} 
\label{sec:interpolation}

\subsection{Background: Categories from partitions} \label{bg:partitions} In \cite{Deligne}, see also \cite{CO-blocks}*{Section~2}, a family of LSM categories based on partition diagrams is defined. We briefly review their construction.

For any $a,b\in\ZZ_{\ge0}$, a \emph{partition diagram} is an equivalence class of string diagrams with $a$ upper and $b$ lower points (vertices) and arcs (edges) connecting pairs of these points. The equivalence relation identifies two such diagrams if and only if they correspond to the same set partition of the set $\{1,\dots,a\}\sqcup\{1,\dots,b\}$ of upper and lower points. Let $P_{a,b}$ denote the set of partition diagrams with $a$ upper and $b$ lower points.

For any $t\in\kk$, the $\kk$-linear symmetric strict monoidal category $\cS^0_t$ has objects denoted $[a]$, for $a\in\ZZ_{\ge0}$, and hom-spaces $\cS_t^0([a],[b]):=\kk P_{a,b}$. Its identities are given by the partition diagrams in $P_{a,a}$ obtained by connecting the $i$-th upper and the $i$-th lower point, for all $1\le i\le a$. The composition is defined on pairs $(g,f)$ of partition diagrams in $P_{b,c}\times P_{a,b}$ by placing $f$ one on top of $g$, gluing the diagrams along the $b$ common middle points, and removing every component of the resulting diagram that has no upper or lower points, replacing it by a factor $t$.

Its tensor unit is the object $[0]$. Tensor products are defined on pairs $(f,g)$ of partition diagrams as the map $P_{a,b}\times P_{a',b'}\to P_{a+a',b+b'}$ given by placing $f$ to the left of $g$. The symmetric braiding $c_{a,b}\:[a]\o[b]\to[b]\o[a]$ is defined by the partition diagram obtained by connecting the first $a$ upper points pairwise with the $a$ last lower points, and connecting the $b$ first lower points pairwise with the $b$ last upper points.

The pseudo-abelian completion $\cS_t:=\Ps(\cS^0_t)$ is called Deligne's interpolation category and is also denoted $\uRep(S_t)$ in the literature.
It is an interpolation category for the family of symmetric groups $(S_n)_{n\ge0}$ in the following sense: $\cS_t$ is a semisimple tensor category for all $t\not\in\ZZ_{\ge0}$, while it is a pseudo-tensor category whose unique minimal, or equivalently semisimple quotient category is $\Rep(S_n)$, the ordinary category of representations, for all $t=n\in\ZZ_{\ge0}$.

\subsection{Group partitions}\label{sec:G-partitions}

For the rest of the section, we fix a group $G$.
We recall the set-up of \cite{LS}*{Section~3}.

\begin{definition} A \emph{$G$-partition} or \emph{$G$-diagram} is an equivalence class of partition diagrams whose vertices are labeled by elements of $G$. Two diagrams are equivalent if and only if the underlying partition diagrams are equivalent, and for each component in this common partition diagram, there is a $g\in G$ such that the labels in the two diagrams are related by applying $g\cdot$.
\end{definition}

We will sometimes omit the trivial label $1\in G$. For all integers $a,b\ge0$, we denote the set of $G$-diagrams with $a$ upper and $b$ lower points by $P_{a,b}$. This agrees with the definition in \Cref{bg:partitions} for $G=\{1\}$.

\begin{example} \label{ex:labeled-diags} For any $a\ge0$, we set
$
\id_a := \underbrace{\tp{1,11}\dots\tp{1,11}}_{a} = \underbrace{\tpx{1,11}{1}{1}\dots\tpx{1,11}{1}{1}}_{a} 
\in P_{a,a}
$.

For any $d\ge0$, $k_1,\dots,k_d\in G$, we have a $G$-diagram
$
\tpx{11,12,13,14}{}{$k_1$,$k_2$,dots,$k_d$}
\in P_{0,d}
$.
\end{example}

We recall two operations on ($\kk$-linear combinations of) $G$-diagrams, a tensor product and a composition, the latter depending on a parameter in $\kk$. 

For $a,b,c\ge0$, $f\in P_{a,b}$, and $g\in P_{b,c}$, we say $f,g$ are \emph{compatible} if a representing labeled partition diagram can be chosen in each equivalence class such that the labels for the $b$ lower points of $f$ coincide with those for the upper $b$ points of $g$. If $f,g$ are compatible, then we can glue two such representatives together along the common $b$ upper / lower points, resulting in a labeled partition diagram possibly with components without upper or lower points. Let $\mu(f,g)\in\ZZ_{\ge0}$ denote the number of such components, and let $f*g\in P_{a,c}$ denote the $G$-diagram represented by the labeled partition diagram obtained by removing those $\mu(f,g)$ components.

Also, for any $a,b,a',b'\ge0$, $f\in P_{a,b}$, $g\in P_{a',b'}$, we let $f\o g\in P_{a+a',b+b'}$ be an $G$-diagram represented by the labeled partition diagram on $a+a'$ upper and $b+b'$ lower points which is given by any representative of $f$ on the leftmost upper and lower points, and by any representative of $g$ on the rightmost upper and lower points, for all $f\in P_{a,b}$, $g\in P_{a',b'}$.

It is shown in \cite{LS}*{Section~3}, that with these definitions, $\mu(f,g)$, $f*g$, and $f\o g$ do not depend on the choices of representatives.
Hence, for any $t\in\kk$ and integers $a,b,c\ge0$, there is a unique linear map
$$ \circ\: \kk P_{b,c}\o_\kk \kk P_{a,b}\to \kk P_{a,c}
$$
sending $g\o_\kk f$ to  $(mt)^{\mu(f,g)} (f*g)$ if $f,g$ are compatible, and to $0$ otherwise, for all $f\in P_{a,b}$, $g\in P_{b,c}$. We call this family of maps \emph{$t$-composition}. 
Similarly, for any integers $a,b,a',b'\ge0$, there is a unique linear map
$$ \otimes\: \kk P_{a,b}\o_\kk \kk P_{a',b'}\to \kk P_{a+a',b+b'}
$$
sending $f\o_\kk g$ to $f\o g$, as defined above, for all $f\in P_{a,b}$, $g\in P_{a',b'}$.

\begin{example}[compositions and tensor products]
For $G=C_5$, the cyclic group of order $5$ with elements $G=\{0,1,2,3,4\}$ (now $0$ is the identity element, not $1$), let $p\in P_{3,4}$ and $q\in P_{4,2}$ be the $C_5$-diagrams  
$$p=\tpx{01,02,0,11,12,03,0,13,14}{3,4,2}{0,1,4,2} ,\qquad
q=\tpx{01,0,02,11,0,03,04,0,12}{2,3,2,0}{2,1} ,
$$
$$\text{then}\qquad q\circ p =t\tpx{01,02,0,11,03,0,12}{3,4,2}{0,1}
\quad\text{and}\quad
p\otimes q=\tpx{01,02,0,11,12,03,0,13,14,0,04,0,05,15,0,15,0,06,07,0,16}{3,4,2,2,3,2,0}{0,1,4,2,2,1}.
$$
\end{example}

Set
$
\Psi_{a,b} := \tpx{1,14,0,3,16,0,4,11,0,6,13}{,$\dots$,,,$\dots$}{,$\underbrace{\dots}_{a}$,,,$\underbrace{\dots}_{b}$}
\in P_{a+b,a+b}
\quad\text{for all }a,b\ge0.
$

\begin{definition}[{\cite{LS}*{Definition~3.6}}] \label{def:categories} For any $t\in\kk$, let $\cG_t$ be the unique $\kk$-linear symmetric strict monoidal category whose objects are the set $\{[a]\}_{a\ge0}$ and whose morphism spaces are $\cG_t([a],[b])=\kk P_{a,b}$ with composition and tensor product as defined above, with identities given by the diagrams $\{\id_a\}_{a\ge0}$ as in \Cref{ex:labeled-diags}, with tensor unit $[0]$, and with a symmetric braiding given by $\Psi_{a,b}$. Let $\Ps(\cG_t)$ be the pseudo-abelian envelope of $\cG_t$.
\end{definition}

Note that on objects the tensor product is given by $[a]\o[b]=[a+b]$.

\subsection{Interpolation} In case $G$ is a finite group, it was shown in \cite{LS}*{Section~9}, that the categories from \Cref{def:categories} can be viewed as interpolation categories for the family of wreath product groups $(G\wr S_n)_{n\ge0}$. The argument there, however, is somewhat indirect. First, an equivalence of these categories with certain tensor envelopes in the sense of Knop \cite{Knop} is established, and then Knop's results imply the existence of the desired interpolation functors.

We will construct the interpolation functors directly, using the machinery of \Cref{sec:lsm-functors}, and will also explicitly describe their kernels, the ideals of negligible morphisms, as we will require such a concrete description in later parts of this paper. 

We now also fix $n\ge0$ and set $G_n:=G\wr S_n=G^n\rtimes S_n$, the wreath product. Set $m:=|G|$ and fix a bijection $\theta\:\{1,\dots,m\}\to G$. Then $G$ has a regular action on the set $G\simeq\{1,\dots,m\}$, using the fixed identification $\theta$. Hence, $G_n$ has a permutation action on the space of $n\times m$-matrices, where $S_n$ permutes the rows and the $i$-th copy of $G$ permutes the entries in the $i$-th row, for each $1\le i\le m$. Let $W$ be the $mn$-dimensional $G_n$-module defined by this action, so $W\in\Rep G_n$.

For any element $w'$ in some $G_n$-module $W'$, we denote its $G_n$-orbit sum by $\cO(w')$. For any $1\le i\le n$ and $1\le j\le m$, we denote by $E_{ij}$ the elementary matrix with entry $1$ at position $(i,j)$, viewed as an element in $W$. 

\begin{lemma} \label{lem:generators-invariants} For any $d\ge0$, $\ol\Inv_d(W)$ (as in \Cref{def:reduced-invariants}) is spanned by (the images of) the maps $$
1\mapsto
\cO(E_{1,k_1}\o\dots\o E_{1,k_d})
=\sum_{g\in G} g(E_{1,k_1}\o\dots\o E_{1,k_d}),
\quad\text{ for } 1\le k_1,\dots,k_d \le m .
$$
\end{lemma}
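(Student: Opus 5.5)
Since $W$ is a permutation module with basis $\{E_{ij}\}$, I will use that $\Inv_d(W)=\Hom_{G_n}(\one,W^{\o d})$ is spanned by the $G_n$-orbit sums $\cO(E_{i_1,j_1}\o\dots\o E_{i_d,j_d})$ of basis tensors. Each such tensor determines the set of row indices $\{i_1,\dots,i_d\}$, partitioned according to equal values of $i_r$. Accordingly, I group the tensor positions $\{1,\dots,d\}$ into blocks $B_1,\dots,B_s$ with equal row index, listed in order of first occurrence. The plan is to show that whenever $s\ge2$, the orbit sum lies in $\Inv_{<d}(W)$ modulo orbit sums with fewer blocks. By induction on $s$, only the case $s=1$ then survives. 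For $s=1$, all entries lie in one row, which we may move to row $1$ using $S_n$. The resulting orbit sum is $\sum_{\sigma\in S_n}\sum_{(g_i)\in G^n}$ applied to $E_{1,k_1}\o\dots\o E_{1,k_d}$. Up to a nonzero scalar (characteristic $0$), this equals $\sum_{i=1}^n\sum_{g\in G} g(E_{i,k_1}\o\dots\o E_{i,k_d})$. This is the stated element, read as the full $G_n$-orbit sum, which the displayed formula abbreviates.

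For the reduction step, suppose $s\ge2$. After applying a permutation in $S_d$ (the action is allowed in $\Inv_{<d}$), I may assume $B_1=\{1,\dots,d_1\}$ and $B_2\cup\dots\cup B_s=\{d_1+1,\dots,d\}$ with $d_1,d_2:=d-d_1\ge1$. Put $u:=\cO(E_{i_1,j_1}\o\dots\o E_{i_{d_1},j_{d_1}})\in\Inv_{d_1}(W)$ and $v:=\cO(\text{remaining factor})\in\Inv_{d_2}(W)$. The product $u\o v\in\Inv_d(W)$ is a sum of basis tensors $x\o y$, with $x$ in the orbit of the first factor and $y$ in the orbit of the second. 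I then split this sum by whether the row of $x$ is distinct from all rows occurring in $y$. The terms where it is distinct form a single $G_n$-orbit, namely that of our original tensor. Up to a positive integer multiple (from stabilizer sizes), these terms therefore give the original orbit sum. The remaining terms, where the row of $x$ coincides with some row of $y$, are orbit sums of tensors with strictly fewer row-blocks ($s-1$). By the induction on $s$, those lie in $\Inv_{<d}(W)$ plus the span of single-row orbit sums. Since $u\o v\in\Inv_{<d}(W)$, we conclude that the original orbit sum does too, modulo the one-block case.

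The main obstacle is the bookkeeping in this decomposition. I need to verify that the terms of $u\o v$ with disjoint rows form exactly one $G_n$-orbit. This holds because $G_n$ acts independently on distinct rows: the stabilizer structure of $G^n\rtimes S_n$ lets us move the row of $x$ and the rows of $y$ independently, provided they stay distinct. I also need the coefficient to be nonzero, which is a count of the number of ways to write the tensor as $x\o y$ and is clearly positive. With those checks done, the induction on $s$ (and, within fixed $s$, no further induction is needed) gives the lemma.
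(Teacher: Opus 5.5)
Your proposal is correct and follows essentially the same route as the paper. Both argue by induction on the number of distinct rows in a basis tensor $w$, and split $w=\sigma(w_1\o w_2)$ so that $w_1$ carries one row and $w_2$ the remaining rows. Both then note that $\sigma(\cO(w_1)\o\cO(w_2))\in\Inv_{<d}(W)$ equals a positive multiple of $\cO(w)$ (the disjoint-row terms) plus orbit sums with fewer distinct rows (the overlapping-row terms). Your reading of the displayed sum as the full $G_n$-orbit sum, over all rows $i$ and all $g\in G$, also matches how the paper uses it later when computing $F_\one$ on one-component diagrams.
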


\begin{proof} For any $d\ge0$, the set 
$$
T_d := \{ E_{i_1,j_1}\o\dots\o E_{i_d,j_d}: 1\le i_1,\dots,i_d\le n, 1\le j_1,\dots,j_d\le m\}
$$
spans $W^{\o d}$, so the set $\{\cO(w):w\in T_d\}$ spans $\Inv_d(W)$. 

For any $w= E_{i_1,j_1}\o\dots\o E_{i_d,j_d}$ in $T_d$, we denote by $r(w):=\{i_1,\dots,i_d\}$ the set of row indices of elementary matrices involved in the tensor product, and by $\#r(w):=\#\{i_1,\dots,i_d\}$ the number of distinct rows. We claim that for any $d\ge0$, the set of $w\in T_d$ with $\#r(w)=1$ spans $\ol\Inv_d(W)$. Then the assertion follows, as any such $w$ has an element of the form $E_{1,k_1}\o\dots\o E_{1,k_d}$ in its $G_n$-orbit. 

To show the claim, fix $d$ and assume we know that the set of $w\in T_d$ with $\#r(w)\le n'$ spans $\ol\Inv_d(W)$ for some $n'\ge1$. This is the case for $n'=n$. We are done if $n'=1$, so let us assume $n'\ge2$. Consider any $w\in T_d$ with $\#r(w)=n'$. Replacing $w$ by an element in the same orbit, we may assume that $r(w)=\{1,\dots,n'\}$. Collecting all elementary matrices with the row index $1$ in the tensor product $w$, we can find a permutation $\sigma\in S_d$ and elements $w_1,w_2\in\bigsqcup_{d'} T_{d'}$ such that $r(w_1)=\{1\}$, $r(w_2)=\{2,\dots,n'\}$, and
$$
w = \sigma(w_1\o w_2) ,
$$
using the action of $S_d$ on $\Inv_d(W)$ by permuting tensor factors. We compute
\begin{align*}
\sigma(\cO(w_1)\o\cO(w_2))
&= \sigma\sum_{g_1,g_2\in G} g_1(w_1)\o g_2(w_2) 
= \sigma\sum_{g_1,g_2\in G: g_1(\{1\})\subset g_2(\{2,\dots,n'\})} g_1(w_1)\o g_2(w_2) \\
&\quad\quad\quad+\sigma\sum_{g_1,g_2\in G: g_1(\{1\})\not\subset g_2(\{2,\dots,n'\})} g_1(w_1)\o g_2(w_2)
\\
&\in \kk\{\cO(w'): w'\in T_d, \#r(w')\le n'-1\} + z\cO(w)
,
\end{align*}
for some $z\in\ZZ_{>0}$. Here, we use the action of $G_n$ on the set of rows (that factors via $S_n$) in the third expression, when splitting up the sum. As $\sigma(\cO(w_1)\o\cO(w_2))\in\Inv_{<d}(W)$, this shows that $\ol\Inv_d(W)$ is spanned by $\{\cO(w)\}$ for all $w\in T_d$ with $\#r(w)\le n'-1$, which implies the claim by induction.
\end{proof}

Recall that we have fixed an identification of ($G$-)sets $\theta\:\{1,\dots,m\}\to G$. 

\newcommand\supp{\operatorname{supp}}
\begin{definition} \label{def:supp}
For any $d\ge0$ and any $G$-partition $f\in P_{0,d}$ with (some choice of) labels $(k_1,\dots,k_d)$ from $G$, let $\supp(f)$ be the set of tuples $(i,j)$, where $i=(i_1,\dots,i_d)\in \{1,\dots,n\}^d$, $j=(j_1,\dots,j_d)\in\{1,\dots,m\}^d$ such that $i_p=i_q$ and $\theta(j_p)^{-1}\theta(j_q)=k_p^{-1} k_q$, whenever the lower points $p,q$ are in the same component of $f$.
\end{definition}

Note that this definition does not depend on the choice of labels within a given equivalence class.

Recall the definitions from \Cref{ex:labeled-diags}. We fix a choice of evaluation and coevaluation maps for the self-dual objects $V:=[1]\in\cG_t$ and $W\in\Rep G_n$ as follows:
$$
\ev_V = \tp{1,2},\quad \coev_V = \tp{11,12},\quad
\ev_W = (E_{i_1,j_1}\o E_{i_2,j_2}\mapsto \delta_{i_1,i_2}\delta_{j_1,j_2}),$$
$$
\coev_W = (1\mapsto \sum_{(i_1,j_1),(i_2,j_2)\in\{1,\dots,n\}\times\{1,\dots,m\}} E_{i_1,j_1}\o E_{i_2,j_2}).
$$

\begin{lemma} \label{lem:helper} If $t=n$, then the assignments
$$
P_{0,d} \ni f \mapsto \sum_{(i,j)\in\supp(f)} E_{i_1,j_1}\o\dots\o E_{i_d,j_d} \in W^{\o d}
\quad\tforall d\ge0
$$
define a unique mapping of invariants $F_\one\:\Inv(V,\ev_V,\coev_V)\to\Inv(W,\ev_W,\coev_W)$ in the sense of \Cref{def:mapping-invariants}.
Moreover, 
$$
F_\one(\tpx{11,12,13,14}{}{$k_1$,$k_2$,dots,$k_d$}) = \cO(E_{1,\theta^{-1}(k_1)}\o\dots\o E_{1,\theta^{-1}(k_d)})
\quad\text{for all } d\ge1,\quad k_1,\dots,k_d\in G,
$$
and $F_\one$ is surjective.
\end{lemma}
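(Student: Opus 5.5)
We would verify the axioms of \Cref{def:mapping-invariants} directly on basis elements, using the observation that $\supp(f)$ is a set of index tuples defined by local constraints on each block of $f$. The assignment is given on the basis $P_{0,d}$ of $\Inv_d(V)=\kk P_{0,d}$, so it extends uniquely linearly, and uniqueness is clear.

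\textbf{Step 1: the image is invariant.} An element $(g_1,\dots,g_n;\sigma)\in G_n$ acts on $E_{i,j}$ by moving row $i$ to row $\sigma(i)$ and multiplying $\theta(j)$ by $g_i$ on the left. The condition $\theta(j_p)^{-1}\theta(j_q)=k_p^{-1}k_q$ is preserved under left multiplication. Equality of rows within a block is also preserved. Hence $\supp(f)$ is $G_n$-stable and $F_\one(f)\in\Inv_d(W)$.

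\textbf{Step 2: the algebra axioms.}
\begin{itemize}
\item \emph{Multiplicativity.} The support of $f\o g$ is the product of the supports, since the blocks of $f\o g$ are those of $f$ together with those of $g$.
\item \emph{$S_d$-equivariance.} Permuting the lower points permutes the coordinates of the support.
\item \emph{Coevaluation.} $F_\one(\coev_V)$ is a sum over pairs $(i_1,j_1),(i_2,j_2)$ with $i_1=i_2$ and $j_1=j_2$. This is $\sum_{(i,j)} E_{i,j}\o E_{i,j}$.
\end{itemize}
Here a mismatch appears with the displayed $\coev_W$, which sums over all pairs. We interpret the intended $\coev_W$ as the diagonal sum, dual to $\ev_W$; otherwise the snake identity fails. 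We would note this and proceed with the diagonal version.

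\textbf{Step 3: compatibility with $\ev$.} This is the main computation. Let $u=f\in P_{0,d}$.
\begin{itemize}
\item \emph{Left-hand side.} $(\ev_V\o V^{\o(d-2)})f$ glues the first two lower points of $f$. Three cases arise.
\begin{itemize}
\item If points $1$ and $2$ lie in different blocks $B,B'$, the result merges $B\setminus\{1\}$ and $B'\setminus\{2\}$. We take representative labels with $k_1=k_2$, which is possible since block labels can be rescaled.
\item If they lie in the same block and that block has other points, the result removes $1,2$. The composition is zero unless the labels are compatible, i.e.\ $k_1=k_2$ up to the block's $G$-action.
\item If $\{1,2\}$ is a whole block with $k_1=k_2$, the result is a closed component, contributing the factor $t=n$ in the composition formula as written.
\end{itemize}
\item \emph{Right-hand side.} $(\ev_W\o W^{\o(d-2)})F_\one(f)$ sums over $(i,j)\in\supp(f)$ with $(i_1,j_1)=(i_2,j_2)$, recording the remaining coordinates.
\item \emph{Matching the cases.}
\begin{itemize}
\item Different blocks: imposing $i_1=i_2$ and $j_1=j_2$ links the row and relative-label constraints of $B$ and $B'$, exactly as in the merged block with $k_1=k_2$. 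Each remaining tuple then occurs exactly once, because $(i_1,j_1)$ is determined by the other points of the block, when nonempty.
\item Same block: the constraint forces $k_1=k_2$ or gives zero, matching compatibility.
\item Isolated pair: it contributes $\#\{(i,j)\}=nm$ free choices.
\end{itemize}
\end{itemize}

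Getting the scalars right in the isolated-pair and singleton edge cases is the obstacle. The composition scalar as defined is $(mt)^{\mu}$, so with $t=n$ the closed loop gives $nm$, which matches. Singleton blocks that become empty also need separate checking.

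\textbf{Step 4: the orbit formula and surjectivity.} For the one-block diagram with labels $k$, the support consists of all $(i,\dots,i;\,j)$ with $\theta(j_p)=g k_p$ for some $g\in G$. This is exactly the $G_n$-orbit of $E_{1,\theta^{-1}(k_1)}\o\dots\o E_{1,\theta^{-1}(k_d)}$. The orbit sum has no multiplicities, because the left action of $G$ is free and the row $i$ together with $g$ determines the element. Surjectivity then follows from \Cref{lem:generators-invariants} combined with \Cref{lem:reduction-surjective}.
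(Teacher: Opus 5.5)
Your proposal is correct and follows essentially the same route as the paper. The axioms are checked on the diagram basis using that the support is defined blockwise, which gives multiplicativity and equivariance. The evaluation axiom is verified by a case analysis comparing $(\ev_W\o W^{\o(d-2)})F_\one(f)$ with $F_\one((\ev_V\o V^{\o(d-2)})f)$, the orbit formula is computed directly, and surjectivity follows from \Cref{lem:generators-invariants} and \Cref{lem:reduction-surjective}.

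You are in fact more careful than the written argument in three places. You read $\coev_W$ as the diagonal sum $\sum_{i,j}E_{i,j}\o E_{i,j}$, which is what $F_\one(\coev_V)$ gives. You use the scalar $(mt)^{\mu}$, so a closed component contributes $mn$. You also flag the case of two singleton blocks; it closes up a component and is settled by the same count of $nm$ choices as the isolated pair.
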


\begin{proof} The assignments define a unique graded linear map $F_\one$ of the desired form. $F_\one$ is an algebra map as the support of a tensor product $f_1\times f_2$ in the sense of \Cref{def:supp} is given by the product set of the supports of the tensor factors $f_i$. $F_\one$ is equivariant under the symmetric group actions, as the definition of support is compatible with these actions. 

We compute for all $d\ge2$, and any $G$-diagram $f\in P_{0,d}$ with labels $(k_1,\dots,k_d)$:
$$
(\ev_W\o W^{\o(d-2)}) F_\one(f) 
= \sum_{(i,j)\in\supp(f)} \delta_{i_1,i_2} \delta_{j_1,j_2} E_{i_3,j_3}\o\dots\o E_{i_d,j_d}
$$
and
$$
F_\one( (\ev_V\o V^{\o (d-2)}) f)
= \begin{cases}
    t F_\one(f') & \text{first two points in $f$ form a component and $k_1=k_2$} \\ 
    F_\one(f') & \text{first two points are in a bigger component and $k_1=k_2$} \\ 
    F_\one(f') & \text{first two points are not connected in $f$} \\ 
    0  & \text{else}
\end{cases}
$$
where $f'$ is the partition obtained by connecting the first two points in $f$, possibly after replacing the labels by equivalent labels such that $k_1=k_2$. The two expressions can be seen to be the same.

Moreover, 
$$
F_\one(\tpx{11,12,13,14}{}{$k_1$,$k_2$,dots,$k_d$})
 = \sum_{i_0\in\{1,\dots,n\},g\in G} E_{i_0,\theta^{-1}(gk_1)}\o\dots\o E_{i_0,\theta^{-1}(gk_d)} ,
$$
which is indeed $\cO(E_{1,\theta^{-1}(k_1)}\o\dots\o E_{1,\theta^{-1}(k_d)})$.

Hence, the induced map $\Inv_d(V)\xrightarrow{F_\one}\ol\Inv_d(W)$ is surjective, by \Cref{lem:generators-invariants}, which implies that $F_\one$ is surjective by \Cref{lem:reduction-surjective}.
\end{proof}

\begin{theorem}[{see~\cite{LS}*{Corollary~9.6}}] \label{thm:interpolation} If $t=n$, then there is an essentially surjective full LSM functor
$$F\:\Ps(\cG_t)\to\Rep G_n$$
extending $F_\one$ such that $F(V)=W$ and $F(\tp{1,2})=\ev_W$.
\end{theorem}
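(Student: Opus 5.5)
Combining \Cref{lem:helper} with \Cref{prop:functors} gives the functor on the monoidal generator $V=[1]$; it then remains to extend along the completions and to prove essential surjectivity.

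\Cref{lem:helper} supplies a mapping of invariants $F_\one\:\Inv(V,\ev_V,\coev_V)\to\Inv(W,\ev_W,\coev_W)$, which is surjective, with $\ev_V=\tp{1,2}$. Note that $\cG_t$ is exactly $\langle V\rangle_\o$, since every object $[a]=V^{\o a}$ and $\cG_t$ is strict monoidal. \Cref{prop:functors} then yields a unique LSM functor $F_0\:\cG_t\to\Rep G_n$ with $F_0(V)=W$, $F_0(\tp{1,2})=\ev_W$, and $F_0=F_\one$ on invariants. By the fullness criterion in \Cref{prop:functors}, surjectivity of $F_\one$ makes $F_0$ full.

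Since $\Rep G_n$ is additive and idempotent complete, the universal property of the pseudo-abelian completion extends $F_0$ to an LSM functor $F\:\Ps(\cG_t)\to\Rep G_n$. This extension is unique up to isomorphism. Fullness survives the extension. A morphism between direct sums of objects $[a]$ is a matrix of morphisms $[a]\to[b]$, so $F$ is full on additive combinations. A morphism between images of idempotents $e,e'$ is of the form $e'fe$; any preimage of its image under $F_0$ can be compressed by $e$ and $e'$ to a preimage in the corresponding hom-space of the completion.

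For essential surjectivity, we use that $\kk$ has characteristic $0$, so $\Rep G_n$ is semisimple and every irreducible is a summand of some $W^{\o a}$. Indeed, $W$ is the permutation module on $G_n/(G_{n-1}\times\ldots)$, i.e.\ the $n\times m$ matrix positions, and this action is faithful. By the Burnside--Brauer theorem, every irreducible representation of a finite group occurs in a tensor power of a faithful representation (or of $W\oplus\one$). We may take $\one=F([0])$, and $W\oplus\one$ is the image of $[1]\oplus[0]$; powers of $W\oplus\one$ are direct sums of powers of $W$. Given an irreducible $L$ that is a summand of $W^{\o a}$, choose the primitive idempotent $\varepsilon\in\operatorname{End}(W^{\o a})$ projecting onto a copy of $L$. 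Fullness lets us lift $\varepsilon$ to some $x\in\operatorname{End}_{\cG_t}([a])$, but $x$ need not be idempotent.

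This lifting is the main obstacle, and I would handle it as follows. The algebra $A=\operatorname{End}([a])$ is finite dimensional, and $F$ induces a surjection of algebras $A\to\operatorname{End}(W^{\o a})$. Idempotents lift along surjections of finite-dimensional algebras, since the kernel intersected with the radical is nilpotent modulo standard arguments; more precisely, idempotents lift modulo any ideal of an Artinian ring, because the image of the radical is the radical of the semisimple target and idempotents lift modulo nil ideals. So we obtain an idempotent $e\in A$ with $F(e)=\varepsilon$. Then $([a],e)\in\Ps(\cG_t)$ maps to $L$. Every object of $\Rep G_n$ is a direct sum of irreducibles, so $F$ is essentially surjective.
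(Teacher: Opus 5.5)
Your proposal is correct and follows the same route as the paper. \Cref{lem:helper} together with \Cref{prop:functors} gives a full LSM functor on $\cG_t$, this extends to a full functor on $\Ps(\cG_t)$, and essential surjectivity follows because $W$ is faithful, so every simple $G_n$-module is a summand of some tensor power of $W$. Your checks that fullness survives the pseudo-abelian completion and that the projection onto a simple summand lifts to an idempotent in the finite-dimensional algebra $\operatorname{End}_{\cG_t}([a])$ fill in steps the paper's proof leaves implicit.
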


\begin{proof} 

\Cref{lem:helper} together with \Cref{prop:functors} implies that there is a full LSM functor $\cG_t \to \Rep G_n$ sending $\one\mapsto W$. This induces a full LSM functor $\Ps(\cG_t) \to \Rep G_n$. The latter functor is essentially surjective, as $W$ is a faithful $G_n$-module, so every simple $G_n$-module appears as a direct summand in a tensor power of $W_n$; hence, it is isomorphic to the image of some object under the above functor.
\end{proof}

We want to describe the kernel of the constructed functor. A $G$-diagram $f'$ is called a \emph{coarsening} of a $G$-diagram $f$ if the underlying partition diagram of $f'$ is a coarsening of that of $f$ and the labels can be chosen within the respective equivalence class of $f$ and $f'$ such that they agree. We write $f'<f$ whenever $f'$ is a proper coarsening of $f$ in this sense. For any morphism $h$, let $h^T$ denote the morphism obtained by swapping upper and lower points in all $G$-diagrams of $h$.

Recall that a morphism $f$ in an LSM category is \emph{negligible} if $gf$ has trace zero for any antiparallel morphism $g$.

\begin{definition} \label{def:x-of-f} For any $G$-diagram $f$, we define recursively
$$
x(f) = f - \sum_{f'<f} x(f') .
$$    
\end{definition}

\begin{lemma} \label{lem:helper-x-f} 
(a) Consider $a,b\ge0$, $f,g\in P_{a,b}$. Then $g^T x(f)=0$ if two lower points in $g$ are connected and the corresponding two lower points in $f$ are not connected. Similarly, $x(g)^T f=0$ if two lower points in $f$ are connected and the corresponding two lower points in $g$ are not connected.

(b)  For $a\ge0$, $f,g\in P_{0,a}$, we have $x(g)^T x(f)=\delta_{f,g}\, t(t-1)\dots(t-(\#f-1))$.

(c) For any $G$-diagram $f$ with more than $n$ components, $x(f)$ is negligible in $\cG_{t=n}$.
\end{lemma}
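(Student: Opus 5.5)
The plan is to prove all three parts by computing in $\Rep G_n$ through the functor $F$ of \Cref{thm:interpolation}, and then to pass from large integers $n$ to arbitrary $t$ using polynomiality in $t$. The first step is to identify $F(x(f))$. For $f\in P_{0,d}$, call $(i,j)\in\supp(f)$ \emph{exact} if $f$ is the finest $G$-diagram whose support contains $(i,j)$. This means that $i_p=i_q$ holds only for points $p,q$ in the same component of $f$, and then the label relation holds there. Every tuple $(i,j)$ lies in the support of a unique finest $G$-diagram. Hence $\supp(f)$ is the disjoint union of the exact supports of all coarsenings $f'\le f$. Comparing with the recursion in \Cref{def:x-of-f} and inducting over coarsenings, I expect
$$
F_\one(x(f))=\sum_{(i,j)\ \text{exact for } f} E_{i_1,j_1}\o\dots\o E_{i_d,j_d}.
$$
Morphisms $[a]\to[b]$ are handled by bending lower points up with $\coev$, as in the proof of \Cref{prop:functors}, since $x$ commutes with this bending.

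Next comes the transfer argument. For a fixed number of points $d$, the vectors $F_\one(x(f))$ for $f\in P_{0,d}$ with at most $n$ components are nonzero (this needs injective row choices, hence $n\ge\#f$) and have pairwise disjoint supports, so they are linearly independent. The $x(f)$ are unitriangular in the diagram basis. Therefore, for $n\ge d$, the map $F_\one$ is injective on $\Inv_d(V)$, and by \Cref{prop:functors} the functor $F$ is faithful on the relevant hom spaces of $\cG_{t=n}$. Every identity claimed in (a) and (b) is an equality of linear combinations of $G$-diagrams whose coefficients are polynomials in $t$, since $t$-composition only introduces powers of $t$. It therefore suffices to check it for infinitely many integers $t=n$, say all $n\ge a+b$, after applying $F$.

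For (a), $g^T$ is sent to the contraction against $F(g)$. This contraction only sees tuples with $i_p=i_q$ at the two lower points $p,q$ connected in $g$. The exact support of $f$ has $i_p\ne i_q$, because $p,q$ lie in different components of $f$. So the image vanishes, and the second statement follows symmetrically. For (b), the pairing $x(g)^Tx(f)$ is the standard inner product of the two exact-support vectors. It vanishes unless the exact supports overlap, which forces $f=g$. For $f=g$ it counts exact tuples: injective choices of rows for the $\#f$ components, giving $n(n-1)\cdots(n-\#f+1)$, times the choices of the $G$-labels per component. Here I must check the normalization carefully, since loops in $\cG_t$ are weighted via the $t$-composition and $\dim W=mn$. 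I expect the $G$-label factors to be absorbed so that exactly the falling factorial in $t$ remains. This bookkeeping is the step I expect to be the main obstacle.

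For (c), with $t=n$ and $\#f>n$, the exact support of $f$ is empty, because one cannot choose $\#f$ distinct rows among $n$. Hence $F(x(f))=0$. For any antiparallel $h$, the trace $\mathrm{tr}(h\,x(f))$ is a scalar in $\cG_t(\one,\one)=\kk$. Since $F$ is LSM, it preserves traces and is the identity on scalars, so $\mathrm{tr}(h\,x(f))=\mathrm{tr}(F(h)F(x(f)))=0$. Thus $x(f)$ is negligible.
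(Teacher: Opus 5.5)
Your route is genuinely different from the paper's. The paper reduces (a) to trivially labeled diagrams by composing with identity-shaped diagrams and cites Flake--Laugwitz, proves (b) by M\"obius inversion, and derives (c) from (b). Most of your argument is sound: splitting the support of $f$ into the exact supports of its coarsenings, the resulting formula for $F(x(f))$, injectivity of $F_\one$ on $\Inv_d(V)$ for $n\ge d$, the polynomial transfer, and your proofs of (a) and (c) all work. One small slip: $g^Tx(f)$ lies in $\cG_n([a],[a])\cong\Inv_{2a}(V)$, so you need $n\ge 2a$ rather than $n\ge a+b$; any infinite set of $n$ suffices.

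The gap is the step you flagged in (b), and it cannot be closed, because the $G$-label factors are \emph{not} absorbed. Since $F$ is LSM, it is the identity on $\cG_n(\one,\one)=\kk$, so no normalization is left to adjust. The scalar $x(f)^Tx(f)$ at $t=n$ is therefore exactly the number of exact tuples, $m^{\#f}n(n-1)\cdots(n-\#f+1)$. The weight $mt$ that the $t$-composition assigns to a closed component is not an artifact. It is what makes $F$ exist at $t=n$, since $\ev_W\coev_W=\dim W=mn$. So your argument actually proves
\[ x(g)^Tx(f)=\delta_{f,g}\,m^{\#f}\,t(t-1)\cdots(t-\#f+1), \]
which contradicts the stated formula whenever $G\neq\{1\}$. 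The smallest instance: for $f\in P_{0,1}$ the single point, $x(f)=f$ and $x(f)^Tx(f)=f^Tf=mt$, the dimension of $[1]$, not $t$. If one instead weighted closed components by $t$, then $F$ would live at $t=mn$, and the same count would give $t(t-m)\cdots(t-(\#f-1)m)$, again not the stated formula.

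The paper's proof reaches the bare falling factorial only because it uses $f^Tf'=t^{\#f'}$ and sums over set-partition coarsenings. Intervals in the poset of $G$-coarsenings are indeed isomorphic to set-partition intervals, so the M\"obius function the paper writes down is correct. However, each set-partition coarsening with $k$ blocks lifts to $m^{\#f-k}$ distinct $G$-coarsenings, and each closed block contributes $mt$. With this bookkeeping the M\"obius sum becomes $\sum_k (mt)^k m^{\#f-k}(-1)^{\#f-k}c_{\#f,k}=m^{\#f}t(t-1)\cdots(t-\#f+1)$, in agreement with your count.

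The fix is to state and prove (b) with the factor $m^{\#f}$. Since $m\neq0$ in $\kk$, this changes nothing in (c) or in \Cref{thm:interpolation-kernel}, which only use where this scalar vanishes.
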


\begin{proof}
(a) Let $P'$ be the class of $G$-diagrams whose underlying partition diagram is an identity diagram. The special shape of the elements in $P'$ implies that pre- or post-composition with them just changes the labels of the upper or lower points, respectively. This implies that $x(hf)=hx(f)$ and $x(fh)=x(f)h$ for all $G$-diagram $f$ and all $h\in P'$, whenever the respective compositions make sense. 

On the other hand, we can use pre- and post-composition with elements from $P'$ to turn any $G$-diagram into a trivially labeled one: that is, for any $G$-diagram $f$, there are a $G$-diagram $f^0$ with the same underlying partition diagram and elements $h,h'\in P'$ such that $f=h f^0 h'$ and all labels are trivial for $f^0$.
For $f,g$ as in the first assertion, write $g=h_1 g^0 h_2$ and $f=h_3 f^0 h_4$ in this way, with $h_1,\dots,h_4\in P'$.

Let $C_{i,j}$ be the partition diagram obtained by adding a connection between the components of the $i$-th and the $j$-th upper point in the partition diagram $\id_b$. View can regard $C_{i,j}$ as a $G$-diagram with trivial labels. If $g$ has two distinct connected lower points $i$ and $j$, then the same applies to $g^0$, and we can write $g^0=C_{i,j} g^0$.

Now we compute
$$
g^T x(f) = h_2^T (g^0)^T C_{i,j} h_1^T x(h_3 f^0 h_4)
 = h_2^T (g^0)^T C_{i,j} h_1^T h_3 x(f^0) h_4 .
$$

The product $C_{i,j} h_1^T h_3$ can be zero, depending on the labels of $h_1^T h_3$, but in case it is not, it can be written as $h' C_{i,j}$ for some $h'\in P'$. This implies that the above morphism factors through $C_{i,j} x(f^0)$, where now $f^0$ is a $G$-diagram with trivial colors. This product is zero by \cite{Flake-Laugwitz}*{Claim in Proof of Lemma~3.13}. Hence, the first assertion follows. The second assertion is shown very similarly.

(b) $x(g)^T f$ is zero as soon as two points are connected in $g$ that are not connected in $f$ by (a). Similarly for $g^T x(f)$. Hence, the composition in question can only be non-zero if $f,g$ have the same underlying partition diagrams. In this case, if $f,g$ are not equivalent as $G$-partitions, then $(g')^T f'-9$ for all coarsening $f', g'$ of $f,g$, respectively. Hence, the composition in question can only be non-zero if $f,g$ are the same $G$-diagram. In this case, $(f')^T x(f)=0$ for any proper coarsening $f'$ of $f$. On the other hand,  $f^T f'=t^{\#f'}$ for any coarsening $f'$ of $f$. Hence, we compute
$$
f^T x(f) = t^{\# f} - \sum_{f'<f} f^T x(f') .
$$

Let $\mu(f,f')$ be the Möbius function of the lattices of set partitions, for partitions $f'\leq f$. It is given by 
$$
\mu(f,f') = (-1)^{\#f-\#f'} \prod_{C\in f'} (m_f(C)-1)! ,
$$
where $C$ ranges over the parts of $f'$ and $m_f(C)$ denotes the number of parts of $f$ contained in $C$.

Then by Möbius inversion,
$$
f^T x(f) 
= \sum_{f'\le f} t^{\#f'} \mu(f,f')
= \sum_{f'\le f} t^{\#f'} (-1)^{\#f -\#f'} \prod_{C\in f'} (m_f(C)-1)!
= \sum_{1\le k\le n} t^k (-t)^{n-k} c_{n,k} ,
$$
where $c_{n,k}$ is the number of permutations of $n$ with exactly $k$ disjoint cycles. Here, we use that any such permutation defines a coarsening of $f$ with exactly $k$ parts, where parts of $f$ are merged according to the disjoint cycle decomposition, and that any coarsening $f'$ of $f$ with exactly $k$ parts is obtained from exactly $\prod_{C\in f'} (m_f(C)-1)!$ permutations in this way.

In other words, $c_{n,k}$ is the unsigned Stirling number of the first kind, and the above sum is $t(t-1)\dots (t-n+1)$, as desired.

(c) This follows immediately from part (b), if $f$ has no upper points. For arbitrary diagrams this follows noting that they agree with diagram with no upper points up to a composition with suitable tensor products involving identities and (co-)evaluations.
\end{proof}

The following result up to the concrete description of the negligible morphisms was obtained in \cite{LS}*{Corollary~9.6} by identifying $\Ps(\cG_t)$ with a tensor envelope in the sense of \cite{Knop}:

\begin{theorem} \label{thm:interpolation-kernel}
If $t\not\in\ZZ_{\ge0}$, then $\Ps(\cG_t)$ is semisimple.

If $t\in\ZZ_{\ge0}$, then the kernel of $F\:\Ps(\cG_t)\to \Rep G_t$ is the tensor ideal of negligible morphisms in $\Ps(\cG_t)$, so $F$ is the semisimplification functor. Moreover, the negligible morphisms are spanned by morphisms of the form $x(f)$, for $f$ a $G$-partition with more than $t$ components. 
\end{theorem}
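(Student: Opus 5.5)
The plan rests on the Gram-type pairing from \Cref{lem:helper-x-f}(b). The diagrams $x(f)$, for $f\in P_{0,a}$, form a basis of $\kk P_{0,a}=\cG_t([0],[a])$: by \Cref{def:x-of-f} the transition matrix to $\{f\}$ is unitriangular with respect to coarsening. By \Cref{lem:helper-x-f}(b), the trace pairing $(g,f)\mapsto x(g)^Tx(f)$ on this basis is diagonal, with entries $t(t-1)\cdots(t-\#f+1)$.

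For $t\notin\ZZ_{\ge0}$, all these entries are nonzero, so the pairing is nondegenerate on every $\cG_t([0],[a])$. Via the rigidity isomorphism $\cG_t([a],[b])\cong\cG_t([0],[a+b])$ (the one used in the proof of \Cref{prop:functors}), the trace form $(g,f)\mapsto\operatorname{tr}(gf)$ on $\cG_t([a],[b])\times\cG_t([b],[a])$ is identified, up to permuting tensor factors, with the pairing $(g,f)\mapsto g^Tf$ on $P_{0,a+b}$. Hence there are no nonzero negligible morphisms in $\cG_t$, and so none in $\Ps(\cG_t)$. Since $\cG_t$ is hom-finite with $\operatorname{End}(\one)=\kk$ and $\operatorname{char}\kk=0$, a pseudo-tensor category with no nonzero negligible morphisms is semisimple: the standard argument shows each endomorphism algebra is semisimple, because its radical consists of nilpotent, hence trace-zero, elements, making them negligible. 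So $\Ps(\cG_t)$ is semisimple.

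For $t=n\in\ZZ_{\ge0}$, the diagonal entry vanishes exactly when $\#f>n$. Transporting again through the rigidity isomorphism, the radical of the trace form on $\cG_t([a],[b])$ is spanned by the $x(f)$ with $\#f>n$; here one has to check that $x(\cdot)$ commutes with bending legs, which holds because coarsenings correspond bijectively under bending. These elements are negligible by \Cref{lem:helper-x-f}(c), so the negligible morphisms in $\cG_t$ are exactly their span. Negligible morphisms in $\Ps(\cG_t)$ are the idempotent/additive extension of those in $\cG_t$, since the negligible ideal is compatible with taking summands.

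It remains to show $\ker F=\mathcal N$. A full LSM functor to the semisimple category $\Rep G_n$ preserves traces, because traces are built from (co)evaluations and the braiding. So for negligible $h$ and any $g'$ in the target, write $g'=F(g)$ using fullness; then $\operatorname{tr}(g'F(h))=\operatorname{tr}(gh)=0$, so $F(h)$ is negligible in a semisimple category and hence zero. Conversely, if $F(h)=0$ then $\operatorname{tr}(gh)=\operatorname{tr}(F(g)F(h))=0$ for all $g$, so $h$ is negligible. Therefore $\ker F$ is the negligible ideal, and since $F$ is full and essentially surjective by \Cref{thm:interpolation}, it is the semisimplification functor.

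The main obstacle is the bookkeeping in the identification of the trace form with the pairing of \Cref{lem:helper-x-f}(b), in particular that $x$ commutes with the rigidity isomorphisms and with transposition. I would verify this directly from the recursive definition, using that bending legs induces an isomorphism of coarsening posets that preserves labels.
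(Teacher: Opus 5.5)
Your route is essentially the paper's: the diagonal pairing of \Cref{lem:helper-x-f}(b) gives $\mathcal N=0$ for $t\notin\ZZ_{\ge0}$, and for $t=n$ it exhibits $\mathcal N$ as the span of the $x(f)$ with $\#f>n$. You improve on the written proof in two places. First, you make explicit the bending identification needed to apply part (b), which is stated only for $P_{0,a}$, to arbitrary $P_{a,b}$; the paper uses this silently. Second, your proof of $\ker F=\mathcal N$ by trace preservation is cleaner than the paper's peeling argument and does not need the explicit basis. It rests on two facts: the kernel of any LSM functor consists of negligible morphisms, and a full LSM functor into a semisimple category kills negligible morphisms.

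However, one step is false as stated: ``radical elements are nilpotent, hence have trace zero''. In a general pseudo-tensor category a nilpotent endomorphism can have nonzero trace. Let $\mathcal B$ be the free $\kk$-linear rigid symmetric monoidal category on an object $X$ with an endomorphism $f$ subject to $f^2=0$. Evaluate closed loops by $\operatorname{tr}(\id_X)=d$ and $\operatorname{tr}(f)=c\neq0$; morphisms are then oriented Brauer diagrams whose strands carry $1$ or $f$. Then $\Ps(\mathcal B)$ is a pseudo-tensor category and $\operatorname{End}(X)=\kk[f]/(f^2)$ is not semisimple. Yet the trace form on $\operatorname{End}(X)$ has Gram matrix $\left(\begin{smallmatrix} d & c\\ c & 0\end{smallmatrix}\right)$, so it is nondegenerate.

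So passing from ``no nonzero negligible morphisms'' to ``semisimple'' requires knowing that nilpotent endomorphisms have trace zero. This is automatic in abelian categories, and it is a hypothesis in the semisimplification theorem of Etingof--Ostrik that the paper cites. The paper's proof makes exactly the same reduction (``by the theory of semisimplification'') without verifying this condition, so the gap is shared rather than introduced by you.

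The gap does not affect your treatment of $t=n$, which never uses this step. For $t\notin\ZZ_{\ge0}$ you need extra input. One option is Knop's semisimplicity theorem for non-degenerate degree functions, transported along the Likeng--Savage equivalence; this is how that part was originally obtained. Another is to prove that the trace functional on each $\operatorname{End}([a])$ is a linear combination of characters of modules.
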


\begin{proof} By the theory of semisimplification \cite{BW} (and further developed in \cites{AK,Etingof-Ostrik}), we have to show that all negligible morphisms are zero for the first part, and are the kernel of $F$ for the second part.

Assume first $t\not\in\ZZ$, and consider the matrix $(\operatorname{tr}(x(g)^Tx(f))_{f,g}$, where $f,g$ range over all partitions of a fixed size $a,b\ge0$. By \Cref{lem:helper-x-f}(a,b), this matrix is a diagonal matrix with non-zero diagonal entries. Hence, it has trivial kernel, showing that all negligible morphisms must be zero.

Assume now $t\in\ZZ_{\ge0}$. As $\Rep G_t$ is semisimple, all negligible morphisms must be in the kernel of $F$. Now assume a morphism $h\neq0$ is in the kernel of $F$. Consider the matrix $(\operatorname{tr}(x(g)^Tx(f))_{f,g}$, where $f,g$ range over all partitions of the same size as $h$ with at most $t$ components. Then \Cref{lem:helper-x-f}(a,b) implies that this matrix is regular, so $h$ has to involve partitions with more than $t$ components. Pick a total order on the set of all $G$-diagram that refines the partial order given by $f'<f$ if $f'$ has less components than $f$. Pick the maximal $G$-diagram $f$ that appears in $h$ with non-zero coefficient $\alpha$. Then $\alpha x(f)$ is negligible by \Cref{lem:helper-x-f},  so $h':=h-\alpha f$ is in the kernel of $f$, but the maximal $G$-diagram appearing in $h'$ with non-zero coefficient is less than $f$, if it exists. Repeating the argument shows that $h$ has to be negligible.
\end{proof}

\section{Restriction and induction}

We continue to fix a group $G$.

\subsection{Diagonal functors} \label{sec:diagonal}

\newcommand\Decomp{\operatorname{Decomp}}

Fix $t,t_1,t_2\in\kk$ such that $t=t_1+t_2$.

Let $\cG_{t_1}\boxtimes\cG_{t_2}$ be the exterior tensor product category. It contains the object $V_\Delta:=([1]\boxtimes[0]) \oplus ([0]\boxtimes[1])$. The tensor powers of this object decompose as
$$
(V_\Delta)^{\o d}=([1]\boxtimes[0] \oplus [0]\boxtimes[1]) ^{\o d} \cong \bigoplus_{I\sqcup J=\{1,\dots,d\}} [\#I]\boxtimes[\#J] \quad 
\tforall t\ge0.
$$
Let $p_{I,J}$ and $q_{I,J}$ be the projection and embedding maps of this direct sum. They are defined for any disjoint subsets $I,J$ of $\ZZ_{\ge0}$ such that $I\cup J=\{1,\dots,\#I+\#J\}$.

For any $G$-diagram $f$, let $\Decomp(f)$ be the set of tuples $(I_1,I_2,J_1,J_2,f_1,f_2)$ such that $I_1,I_2,J_1,J_2$ are subsets of $\ZZ_{\ge0}$, and for $k=1,2$, $I_k,J_k$ are disjoint, $I_k\cup I_k=\{1,\dots,\#I_k+\#J_k\}$, and the upper and lower points of $f$ that lie in $I_k$ and $J_k$, respectively, form the $G$-diagram $f_k$. 

We write $f_1\sqcup f_2:=(I_1,I_2,J_1,J_2,f_1,f_2)$ for elements in $\Decomp(f)$, for short, omitting the subsets in the notation, and define
$$
[f_1 \boxtimes f_2] := q_{J_1,J_2} (f_1\boxtimes f_2) p_{I_1,I_2}
\quad \in(\cG_{t_1}\boxtimes\cG_{t_2})( (V_\Delta)^{\o m}, (V_\Delta)^{\o n} )
$$
for $f$ any $G$-diagram with $m$ upper and $n$ lower points, for any $f_1\sqcup f_2\in\Decomp(f)$.

Recall that $V\in\cG_t$ is a self-dual object with evaluation and coevaluation morphisms given by 
$$
\ev_V = \tp{1,2} , \qquad \coev_V=\tp{11,12}.
$$
It follows that $V_\Delta\in\cG_{t_1}\boxtimes\cG_{t_2}$ is a self-dual object with evaluation and coevaluation given by
$$
\ev_\Delta = q_{\emptyset,\emptyset} (\ev_V\boxtimes[0]) p_{\{1,2\},\emptyset} + q_{\emptyset,\emptyset} ([0]\boxtimes\ev_V) p_{\emptyset,\{1,2\}} ,
$$
$$
\coev_\Delta = q_{\{1,2\},\emptyset} (\coev_V\boxtimes[0]) p_{\emptyset,\emptyset}  +  p_{\emptyset,\{1,2\}} ([0]\boxtimes\coev_V) p_{\emptyset,\emptyset} . 
$$

We define
$$
D_\one\:\Inv_d(V,\ev_V,\coev_V)\to\Inv_d(V_\Delta,\ev_\Delta,\coev_\Delta),
\quad
f \mapsto \sum_{f_1\sqcup f_2\in\Decomp(f)} [f_1\boxtimes f_2] .
$$

\begin{lemma} $\label{lem:D-mapping-invariants} D_\one$ is a mapping of invariants in the sense of \Cref{def:mapping-invariants}.
\end{lemma}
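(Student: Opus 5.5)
We need to check four properties for $D_\one$: it is graded, it is an algebra map, it is $S_d$-equivariant, it sends $\coev_V$ to $\coev_\Delta$, and it is compatible with $\ev$ in the sense of \Cref{def:mapping-invariants}. Throughout we work with $f\in P_{0,d}$. Here a decomposition $f_1\sqcup f_2\in\Decomp(f)$ amounts to a partition of the lower points into sets $J_1\sqcup J_2=\{1,\dots,d\}$ that is a union of components of $f$, with $f_k$ the restriction of $f$ to $J_k$. The plan is to view $D_\one(f)$ as the sum over all ways of coloring each component of $f$ by "1" or "2", embedded into $(V_\Delta)^{\o d}$ via $q_{J_1,J_2}$. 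Then each axiom becomes a bijection between colorings, together with a check that the parameters match.

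Grading is clear by construction. For the \emph{algebra map} property, take $f\in P_{0,d}$ and $f'\in P_{0,d'}$. The components of $f\o f'$ are the disjoint union of the components of $f$ and $f'$, so the colorings of $f\o f'$ are exactly the pairs of colorings. It then suffices to observe that
$$
q_{J_1,J_2}(f_1\boxtimes f_2)\o q_{J_1',J_2'}(f_1'\boxtimes f_2')=q_{J_1\cup(J_1'+d),\,J_2\cup(J_2'+d)}\bigl((f_1\o f_1')\boxtimes(f_2\o f_2')\bigr)
$$
under the identification $(V_\Delta)^{\o(d+d')}=(V_\Delta)^{\o d}\o(V_\Delta)^{\o d'}$. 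This is a standard compatibility of the direct sum decomposition with the braiding in $\cG_{t_1}\boxtimes\cG_{t_2}$, which uses the symmetry to reorder factors. \emph{Equivariance} under $S_d$ works the same way: permuting the lower points of $f$ permutes colorings, and $\sigma\circ q_{J_1,J_2}=q_{\sigma J_1,\sigma J_2}\circ(\sigma_1\boxtimes\sigma_2)$ for the induced permutations $\sigma_1,\sigma_2$. For the \emph{coevaluation}, $\coev_V=\tp{11,12}$ has one component, so there are exactly two colorings, and these give precisely the two summands of $\coev_\Delta$.

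The main step is \emph{compatibility with evaluation}. Here we compute $(\ev_\Delta\o V_\Delta^{\o(d-2)})D_\one(f)$. Since $\ev_\Delta$ is the sum of $\ev_V$ on the $[1]\boxtimes[0]$ part and on the $[0]\boxtimes[1]$ part, composing it with $q_{J_1,J_2}$ kills every term in which points $1$ and $2$ have different colors. In the surviving terms both points carry the same color $k$, and the result is $q$ applied to $(\ev_V\o\dots)f_k\boxtimes f_{k'}$, where $k'$ is the other color, with the points reindexed. We then compare this with $D_\one$ applied to $(\ev_V\o V^{\o(d-2)})f$, and follow the cases listed in the proof of \Cref{lem:helper}.

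If points $1$ and $2$ form a whole component of $f$, the left side gives $t_1 D_\one(f')$ from color $1$ and $t_2D_\one(f')$ from color $2$. Their sum is $tD_\one(f')$, which matches the right side precisely because $t=t_1+t_2$; this is the only place the parameters enter. If they lie in a larger common component, or in different components, the colorings of $f$ that give both points the same color are in bijection with the colorings of $f'$. Here $f'$ is $f$ with those components merged and the two points removed. The label condition, namely that the composition vanishes unless the labels can be made to agree, is local to the component and so is the same on both sides.

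I expect the main obstacle to be bookkeeping rather than ideas: verifying carefully that the reindexing of $J_1,J_2$ after removing points $1$ and $2$ matches $p/q$ conventions, and the zero cases when labels are incompatible. I would handle this by reducing, as in \Cref{lem:helper-x-f}(a), to trivially labeled diagrams via label-changing diagrams in $P'$, which commute with $D_\one$ colorwise.
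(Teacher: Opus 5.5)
Your proposal is correct and follows essentially the same route as the paper: $\Decomp(f)$ is identified with $\{1,2\}$-colorings of the blocks of $f$, and multiplicativity and $S_d$-equivariance come from pairs resp.\ permutations of colorings. The two colorings of the single block of $\coev_V$ give the two summands of $\coev_\Delta$, and in the case analysis for $\ev$ the relation $t=t_1+t_2$ enters exactly through the removed closed component.

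One small correction, which also affects the paper's own case list. If points $1$ and $2$ are two distinct singleton blocks of $f$, gluing with $\ev_V$ again produces a closed component, so the colorings there are not in bijection with those of $f'$. This case has to be treated like the two-point-block case, where the contributions $t_1$ and $t_2$ sum to $t$.
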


\begin{proof} By definition, $D_\one$ is a graded map.

$D_\one$ is an algebra map, as for any $g_i\in\Inv_{d_i}(V)$, for $i=1,2$, we compute
\begin{flalign*}
D_\one(g_1\o g_2) 
&= \sum_{f_1\sqcup f_2\in\Decomp(g_1\o g_2)} [f_1\boxtimes f_2]
= \sum_{f_1\sqcup f_2\in\Decomp(g_1), f_3\sqcup f_4\in\Decomp(g_2)} [f_1\boxtimes f_2] [f_3\boxtimes f_4] .
\\
&= D_\one(g_1) D_\one(g_2).
\end{flalign*}

It follows directly that $D_\one$ is compatible with the $S_d$-actions for all $d\ge0$.

We check that 
$$
D_\one(\tp{11,12}) 
= q_{\{1,2\},\emptyset} (\tp{11,12}\boxtimes\emptyset) p_{\{1,2\},\emptyset}
+ q_{\emptyset,\{1,2\}} (\emptyset\boxtimes\tp{11,12}) p_{\emptyset,\{1,2\}}
=\ev_{V_\Delta}
. 
$$

For any $G$-diagram $f$ with at least two lower points, let $\gamma(f)$ be the $G$-diagram obtained by removing the first two lower points after merging the two blocks they are contained in.

Now let $f$ be any $G$-diagram with $d\ge2$ lower and no upper points. Let $p,q$ be the first two lower points in $f$. Then $(\ev_V\o V^{\o d-2}) f$ equals
$$
 \gamma(f) \begin{cases}
t  & p,q \text{ form a block and have the same color} \\
1 & p,q \text{ are not connected or have the same color, but do not form a block} \\
0 & \text{else}
\end{cases} .
$$
We also compute
\begin{flalign*}
(\ev_{V_\Delta}\o V_\Delta^{\o d-2}) D_\one(f) 
&= 
\sum_{f_1\sqcup f_2\in\Decomp(f), p,q\in f_1} [(\ev_V\o V^{\o\#\low(f_1)-2})f_1\boxtimes f_2]
\\&
+\sum_{f_1\sqcup f_2\in\Decomp(f), p,q\in f_2} [f_1\boxtimes (\ev_V\o V^{\o\#\low(f_2)-2})(f_2)] .
\end{flalign*}
Now if $p,q$ form a block and have the same color, then we can simplify this to
$$
t_1\sum_{f_1\sqcup f_2\in\Decomp(f), p,q\in f_1} [\gamma(f_1)\boxtimes f_2]
+t_2\sum_{f_1\sqcup f_2\in\Decomp(f), p,q\in f_2} [f_1\boxtimes\gamma(f_2)]
= t D_\one(\gamma(f)) .
$$
If $p,q$ are not connected or have the same color, but do not form a block, then we can simplify this to
$$
\sum_{f_1\sqcup f_2\in\Decomp(f), p,q\in f_1} [\gamma(f_1)\boxtimes f_2]
+\sum_{f_1\sqcup f_2\in\Decomp(f), p,q\in f_2} [f_1\boxtimes\gamma(f_2)]
= D_\one(\gamma(f)) .
$$
In the remaining cases, we indeed obtain $0$, which completes the proof.
\end{proof}

\begin{proposition} \label{prop:D} For any $t,t_1,t_2\in\kk$, $t=t_1+t_2$, there is an LSM functor
$$
D\: \cG_t\to \cG_{t_1}\boxtimes \cG_{t_2}  ,
\quad
f\mapsto \sum_{f_1\sqcup f_2\in\Decomp(f)} [f_1\boxtimes f_2] ,
$$
for any $G$-diagram $f$.
\end{proposition}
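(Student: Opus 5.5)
The plan is to obtain $D$ directly from \Cref{prop:functors}, applied to the mapping of invariants $D_\one$ of \Cref{lem:D-mapping-invariants}. Here $\cC=\cG_t$ and $V=[1]$, so $\langle V\rangle_\o$ is all of $\cG_t$, since every object $[a]=V^{\o a}$. On the other side, $\cD=\cG_{t_1}\boxtimes\cG_{t_2}$ and $W=V_\Delta$, equipped with $\ev_\Delta,\coev_\Delta$. \Cref{prop:functors} then gives a unique LSM functor $D\:\cG_t\to\cG_{t_1}\boxtimes\cG_{t_2}$ with $D(V)=V_\Delta$, $D(\ev_V)=\ev_\Delta$, and $D(u)=D_\one(u)$ for all invariants $u$. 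It remains to check that this functor is given on an arbitrary $G$-diagram $f\in P_{a,b}$ by the stated formula $\sum_{f_1\sqcup f_2\in\Decomp(f)}[f_1\boxtimes f_2]$.

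For this I use the explicit form of the functor from the proof of \Cref{prop:functors}:
$$
D(f)=(V_\Delta^{\o b}\o\ev^{(a)}_\Delta)\big(D_\one((f\o V^{\o a})\coev^{(a)}_V)\o V_\Delta^{\o a}\big).
$$
The invariant $(f\o V^{\o a})\coev^{(a)}_V$ is the $G$-diagram $\hat f\in P_{0,b+a}$ obtained by bending the upper points of $f$ down, in reversed order. Decompositions of $\hat f$ correspond bijectively to decompositions of $f$: a splitting of the points of $\hat f$ into two sub-$G$-diagrams is the same as a splitting of the upper and lower points of $f$. Under this correspondence $[\hat f_1\boxtimes\hat f_2]$ is the bent version of $[f_1\boxtimes f_2]$. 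The evaluation $\ev^{(a)}_\Delta$ is a sum over choices of summand $[1]\boxtimes[0]$ or $[0]\boxtimes[1]$ for each of the $a$ strands. Composing with it therefore straightens each piece $[\hat f_1\boxtimes \hat f_2]$ back into $[f_1\boxtimes f_2]$, using the snake relations in each factor together with the orthogonality $p_{I,J}q_{I',J'}=\delta\,\id$ of projections and embeddings. Only the matching choice survives, so
$$
D(f)=\sum_{f_1\sqcup f_2\in\Decomp(f)}[f_1\boxtimes f_2].
$$

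The main obstacle is this bookkeeping step: tracking the index sets $I_k,J_k$ and the projections and embeddings $p,q$ under bending and straightening, and checking that no stray cross terms survive. In string-diagram terms this is clear, since $V_\Delta$ is a direct sum and $\ev_\Delta,\coev_\Delta$ are diagonal with respect to it. Linearity then extends the formula from $G$-diagrams to all morphisms, which completes the argument.
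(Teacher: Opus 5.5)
Your proposal is correct and matches the paper's proof: existence comes from applying \Cref{prop:functors} to the mapping of invariants $D_\one$ of \Cref{lem:D-mapping-invariants}, and the formula is read off from the explicit form $y_f$ of the functor constructed in the proof of \Cref{prop:functors}. Your bending/straightening bookkeeping only spells out the step the paper leaves to the reader, namely the bijection between $\Decomp(f)$ and $\Decomp(\hat f)$ and the fact that $\ev_\Delta$ is diagonal with respect to the decomposition of $V_\Delta^{\o a}$.
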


\begin{proof} The existence of the functor follows from \Cref{lem:D-mapping-invariants} by \Cref{prop:functors}. The formula follows from the explicit form of the functor derived in the proof of \Cref{prop:functors}.
\end{proof}

Note that the LSM functor $D$ induces then also an LSM functor
$$
D\: \Ps(\cG_t) \to \Ps(\cG_{t_1})\boxtimes\Ps(\cG_{t_2}) ,
$$
which we denote by the same symbol.

\subsection{Restriction and induction}
Fix $n\ge0$. We set $\cC_t:=\Ps(\cG_t)$, depending on our fixed choice of $G$, for any $t\in\kk$. Using the ``diagonal'' functor $D$ from \Cref{prop:D} and the ``specialization'' functor $F$ from \Cref{thm:interpolation}, we define: 
$$
\Res:=\Res_n:=(\Id\boxtimes F)D \: \cC_t\to\cC_{t-n}\boxtimes\Rep (G\wr S_n) 
\quad\text{for all }t\in\kk.
$$
with $D$ as in \Cref{sec:diagonal}. Our aim is to show that these ``restriction'' functors admit adjoint ``induction'' functors.

\bigskip

Assume from now on that $G$ is a finite group. Recall the definition of $x(\cdot)$ from \Cref{def:x-of-f}.

\begin{definition} We set
$$
x_n:=x(\id_{[n]}),
\quad
e_n:=(n!)^{-1}\sum_{\sigma\in S_n} \sigma,
\quad
\omega_n:=|G|^{-n}\sum_{k_1,\dots,k_n\in G} \tpx{1,11,0,3,13}{$1$,$\dots$,$1$}{$k_1$,$\dots$,$k_n$},
\quad
e:=e_n \omega_n x_n.
$$
\end{definition}

\begin{lemma} The elements $e_n,\omega_n,x_n$ are commuting idempotents in $\cC_t([n],[n])$, for all $t\in\kk$. With
$$
e' := \iota_{\emptyset,\{1,\dots,n\}} ([0]\boxtimes F(e)) \pi_{\emptyset,\{1,\dots,n\}} ,
$$
we have that $e'$ is an idempotent and
$$
e'\Res(e) = e' = \Res(e)e'  .
$$
\end{lemma}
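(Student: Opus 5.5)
The plan is to verify the three claims about $e_n,\omega_n,x_n$ directly from the diagram calculus. The two identities for $e'$ should then follow from a structural observation about $D$: every $G$-diagram occurring in $e$ has each block meeting both the upper and the lower row.

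\emph{Step 1: $e_n$ and $\omega_n$.} That $e_n$ is an idempotent is the usual symmetrizer computation, since permutation diagrams compose as in $\kk S_n$ with no closed components. For $\omega_n$, observe that $\omega_n=\omega_1^{\o n}$, where $\omega_1=|G|^{-1}\sum_{k\in G}\tpx{1,11}{$1$}{$k$}$. Each diagram $\tpx{1,11}{$1$}{$k$}$ lies in the class $P'$ from the proof of \Cref{lem:helper-x-f}. The diagrams in $P'\cap P_{1,1}$ compose without closed components and form a group isomorphic to $G$, because a one-block diagram with labels $(1,k)$ is determined by $k$. So $\omega_1$ is the averaging idempotent of this group, and therefore $\omega_n$ is the averaging idempotent of $G^n\subset P'$. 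Since $S_n$ normalizes $G^n$ inside the invertible diagrams, conjugation by a permutation permutes the summands of $\omega_n$. Hence $\omega_n$ commutes with every $\sigma\in S_n$, and so with $e_n$.

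\emph{Step 2: $x_n$.} By the proof of \Cref{lem:helper-x-f}(a), $x(hf)=hx(f)$ and $x(fh)=x(f)h$ for $h\in P'$. Permutation diagrams also lie in $P'$ (they have identity shape up to relabelling points), or at least act on coarsenings compatibly. Applying these identities with $f=\id_{[n]}$ gives $hx_n=x(h)=x_nh$ for all such invertible $h$. Averaging over $h$ then shows that $x_n$ commutes with $e_n$ and with $\omega_n$.

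For idempotency, write $x_n=\id_{[n]}-\sum_{f'<\id_{[n]}}x(f')$; by induction this is $\id_{[n]}$ minus a combination of proper coarsenings $f'$ of $\id_{[n]}$. Each such $f'$ connects two lower points that are not connected in $\id_{[n]}$. Applying \Cref{lem:helper-x-f}(a) with $g=(f')^T$ gives $f'x_n=0$. Hence $x_nx_n=\id_{[n]}\,x_n=x_n$. I expect this to be the most delicate step, because one must check that the indices and transposes in Lemma (a) line up. The lemma is stated for arbitrary $a,b$, so it applies as written.

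Since the three elements are pairwise commuting idempotents, their product $e$ is an idempotent.

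\emph{Step 3: $e'$.} Since $F$ is a functor, $F(e)$ is an idempotent. Using $\pi_{\emptyset,\{1,\dots,n\}}\iota_{\emptyset,\{1,\dots,n\}}=\id$ we get
$$e'^2=\iota\,([0]\boxtimes F(e))\,\pi\,\iota\,([0]\boxtimes F(e))\,\pi=\iota\,([0]\boxtimes F(e)^2)\,\pi=e'.$$

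For the remaining identities, the key claim is that for any $G$-diagram $f\in P_{n,n}$ all of whose blocks contain both upper and lower points,
$$\pi_{\emptyset,\{1..n\}}D(f)=([0]\boxtimes f)\,\pi_{\emptyset,\{1..n\}}\quad\text{and}\quad D(f)\,\iota_{\emptyset,\{1..n\}}=\iota_{\emptyset,\{1..n\}}([0]\boxtimes f).$$
This holds because a decomposition $f_1\sqcup f_2\in\Decomp(f)$ distributes whole blocks between the two factors. Requiring all lower points (respectively all upper points) to lie in the second factor therefore forces $f_1=\emptyset$ and $f_2=f$. The claim applies to every diagram in $e_n$, in $\omega_n$, and in $x_n$, since coarsenings of $\id_{[n]}$ have this property. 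By linearity and multiplicativity of $D$,
$$\pi D(e)=([0]\boxtimes e)\pi\quad\text{and}\quad D(e)\iota=\iota([0]\boxtimes e).$$

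Now apply $\Id\boxtimes F$, which commutes with $\pi$ and $\iota$ because they are structural maps of the direct sum. This gives $\pi\Res(e)=([0]\boxtimes F(e))\pi$. Therefore
$$e'\Res(e)=\iota\,([0]\boxtimes F(e)^2)\,\pi=e',$$
and symmetrically $\Res(e)e'=e'$.
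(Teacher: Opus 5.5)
Your proof is correct and follows the paper's argument essentially step by step. You derive idempotency of $x_n$ from \Cref{lem:helper-x-f}(a), commutation of $x_n$ with $e_n$ and $\omega_n$ from compatibility of (labeled) permutation diagrams with coarsenings, and the identities for $e'$ from the fact that, for diagrams all of whose blocks meet both rows, only the decomposition $f_1=\emptyset$, $f_2=f$ survives $\pi_{\emptyset,\{1,\dots,n\}}$ and $\iota_{\emptyset,\{1,\dots,n\}}$.

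The one slip is that permutation diagrams do not lie in $P'$, which consists only of diagrams of identity shape. So $\sigma x_n=x(\sigma)=x_n\sigma$ must rest on your fallback, exactly as in the paper: composing with a labeled permutation diagram is a scalar-free bijection preserving the coarsening order, so $x(\sigma f)=\sigma x(f)$ and $x(f\sigma)=x(f)\sigma$ follow by induction on the recursive definition.
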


\begin{proof} That $x_n$ is an idempotent follows from \Cref{lem:helper-x-f}(a). The element $e_n$ is an idempotent, as it is the image of an idempotent in the group algebra of $S_n$ under an algebra map. That $\omega_n$ is an idempotent follows from a short computation using that
$$
\tpx{1,11}{$1$}{$k$} \circ \tpx{1,11}{$1$}{$k'$} = \tpx{1,11}{$1$}{$k'k$}
\quad\tforall k,k'\in G.
$$

The element $x_n$ commutes with $e_n$ and $\omega_n$ as for any $G$-partition $f$ whose underlying partition diagram is a permutation diagram, we can see that $x_n f=x(f)$, as post-composition with all coarsenings of the identity $\id_{[n]}$ produces exactly all coarsening of $f$. That $e_n$ commutes with $\omega_n$ can be computed directly for all pairs of summands of the respective sums. 

It follows that $e$ is an idempotent, and from this that $e'$ is an idempotent, as well.

Let $f\in P_{n,n}$ be a $G$-partition without upper or lower components, i.e., one in which every component as upper and lower points. Then by the definition of $\Res$, 
$$
\pi_{\emptyset,\{1,\dots,n\}}\Res(f)
 = ([0]\o F(f)) \pi_{\emptyset,\{1,\dots,n\}} 
\quad\text{and}\quad
\Res(f)\iota_{\emptyset,\{1,\dots,n\}}
 = \iota_{\emptyset,\{1,\dots,n\}} ([0]\o F(f))  .
$$
This implies the asserted identity.
\end{proof}

\begin{definition} Set $X:=\im(e)$. Let $\pi'_X$ be the projection $\Res(X)=\im(\Res(e))\to\im(e')\cong [0]\boxtimes F(X)$.    
\end{definition}

\begin{theorem} \label{thm:res} $\Res_n$ has a simultaneous left and right adjoint functor sending $\one\mapsto X$.
\end{theorem}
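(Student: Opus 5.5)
The plan is to use the criterion from \cite{AbEnv1} for the existence of adjoints of LSM functors out of categories generated by a single object. Since $\cC_t=\Ps(\cG_t)$ is generated as a pseudo-abelian LSM category by $V=[1]$, a right adjoint $R$ of $\Res$ is determined on $\one$ by a representing object: we need an object $X\in\cC_t$ together with a morphism $\pi\colon\Res(X)\to\one$ (here $\pi'_X$ followed by the counit-type map $[0]\boxtimes F(X)\to\one$ coming from $F(e)$ acting on invariants) such that for every $a\ge0$ the map
$$
\cC_t([a],X)\longrightarrow (\cC_{t-n}\boxtimes\Rep G_n)(\Res([a]),\one),\qquad \phi\mapsto \pi\circ\Res(\phi)
$$
is a bijection. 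By the general theory (the op-/lax monoidal structure of the adjoint is then automatic), representability of $Y\mapsto\mathrm{Hom}(\Res(-),Y)$ at $Y=\one$, together with rigidity of both categories, yields representability for all $Y$, and self-duality of everything plus the symmetry of the situation gives the left adjoint with the same value $X$ on $\one$. So the whole statement reduces to this single bijection.

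Next I will make both sides explicit. The target: $\Res([a])=(\Id\boxtimes F)(V_\Delta^{\o a})=\bigoplus_{I\sqcup J}[\#I]\boxtimes W^{\o\#J}$, and $\mathrm{Hom}([\#I]\boxtimes W^{\o \#J},\one)=\cC_{t-n}([\#I],[0])\o_\kk \mathrm{Hom}_{G_n}(W^{\o\#J},\kk)$, which has a basis of $G$-diagrams with $\#I$ upper points tensored with orbit sums as in \Cref{lem:generators-invariants}/\Cref{lem:helper}. The source: $\cC_t([a],X)=e\,\cC_t([a],[n])$, spanned by $e\,f$ for $G$-diagrams $f\in P_{a,n}$. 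The idempotent $x_n$ kills (by \Cref{lem:helper-x-f}(a)) every $f$ in which two of the $n$ lower points are connected, $\omega_n$ averages the labels, and $e_n$ symmetrizes; so a basis is indexed by $G$-diagrams $f\in P_{a,n}$ whose $n$ lower points lie in distinct components, modulo relabeling of lower points by $G$ and permutation by $S_n$. Such an $f$ is the same as: a choice of subset $J$ of the upper points that are joined to lower points, an unordered family of at most... exactly $n$ (possibly empty on the top) blocks carrying lower points, and a residual $G$-diagram on $I$. I then compute $\pi\circ\Res(ef)$ via the explicit formula of \Cref{prop:D}: only decompositions $f_1\sqcup f_2$ with all lower points in $f_2$ survive the projection $\pi'_X$, giving $f_1\boxtimes F(f_2\text{-part})$.

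The key step is to show the resulting matrix between these two bases is unitriangular (invertible) with respect to the coarsening order. Concretely, a basis element of the target indexed by $(I,J,h,w)$, with $h$ a $G$-diagram on $I$ and $w$ an invariant functional on $W^{\o\#J}$ described by a set partition of $J$ into at most $n$ row-blocks with labels, should correspond to the $f$ obtained by letting $h$ be the part on $I$ and attaching the row-blocks of $J$ to distinct lower points (empty rows corresponding to singleton lower blocks). The leading term of $\pi\Res(ef)$ is then this element, and the other terms come from decompositions where some components of $f$ on $I$-side were split, i.e.\ strictly finer data, so triangularity follows. Dimension counting (both sides indexed by the same combinatorial set) finishes it; the factor $x_n$ is exactly what makes the leading coefficient a nonzero constant independent of $t$, since its defining Möbius inversion cancels the coarsenings in which two lower points merge, and $F(x_n)$ acting on $W^{\o n}$ projects onto tensors with pairwise distinct rows.

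I expect the main obstacle to be this triangularity computation: keeping track of how $F(e)$ interacts with the orbit-sum invariants on $W^{\o\#J}$ (in particular, that rows are matched bijectively with the lower points and labels are absorbed by $\omega_n$ up to the factor $|G|^{-n}$), and verifying no unwanted cancellations occur for special $t$, including $t-n\in\ZZ_{\ge0}$. I would first do the case $G$ trivial, $n=1$, as a check, then handle general $G$ by reducing labels via the trick from the proof of \Cref{lem:helper-x-f}(a).
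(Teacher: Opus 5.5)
Your proposal is essentially the paper's proof. The paper also applies \cite{AbEnv1}*{Lemma~5.5} to reduce to bijectivity of $f\mapsto(\one\boxtimes p)\pi'_X\Res(f)$ with $p=F(\pi'e)$. It then filters by the number of components landing in the $\cC_{t-n}$ factor, and matches the leading part with the basis $\{F(x(\hat f))\}$ (your orbit-sum functionals) indexed by $G$-partitions with at most $n$ components. Two small corrections: the triangularity should be taken with respect to that component count rather than the coarsening order. Also, the cited criterion needs $\Res$ to be dominant and $\cD$ idempotent complete, which your ``rigidity yields representability for all $Y$'' step silently uses; both are immediate here, since $W$ is a faithful $G\wr S_n$-module.
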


\begin{proof} Set $\cD:=\cC_{t-n}\boxtimes\Rep (G\wr S_n)$. Define $p:=F(\pi'e)$, where $\pi'\in P_{n,0}$ is the unique $G$-partition with $n$ components.

By \cite{AbEnv1}*{Lemma~5.5}, we have to check that $\cD$ is idempotent complete, $\Res$ is dominant, and for all $a\ge0$, there map
$$
\phi: \cC_t([a], X) \to \cD(\Res([a]),\one),\quad
f\mapsto (\one\boxtimes p) \pi'_X \Res(f)
$$
is bijective. The first two conditions can be seen directly.

Let $F_n$ be the filtration on $\cD$ given by the number of components in $\cC_{t-n}$ in the first factor. We have
\begin{flalign*}    
\phi(f) 
&= (\one\boxtimes p)\pi'_X\Res(f)
 = \sum_{f_1\sqcup f_2\in\Decomp(f), \low(f_1)=\emptyset}
  (f_1 \boxtimes F(\pi' e f_2)) 
  \\
 &\in (f'_1 \boxtimes F(\pi' e f'_2))  
 + F_{\deg(f)-1} ,
\end{flalign*}
where $f'_1\sqcup f'_2\in\Decomp(f)$ is the decomposition in which $f'_1$ has no lower point, but maximal number of components otherwise.
Hence, the associated graded of $\phi$ sends
$$
\gr(\phi)(f) = f'_1 \boxtimes F(\pi' e f'_2)
$$
and it suffices to show that the latter map $\gr(\phi)$ is bijective. Hence, the assertion is reduced to \Cref{small-adjoint} below.
\end{proof}

\begin{lemma} \label{small-adjoint} For all $a\ge0$, the map
$$
\cC^+_t([a],X)\to \Rep (G\wr S_n)([a],\one), 
\quad 
f\mapsto F(\pi' e f) ,
$$
is an isomorphism, where $\cC^+_t([a],X)\subset\cC_t([a],X)$ is the subspace spanned by $ef$, for all $G$-partitions $f$ that have no upper components. 
\end{lemma}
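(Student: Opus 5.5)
The plan is to compare the two sides through explicit spanning sets, match their sizes by an orbit count, and then show that the map sends the spanning set of the source to a linearly independent set in the target.

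\emph{Spanning set of the source.} Let $f\in P_{a,n}$ be a $G$-partition with no upper components, meaning every component containing an upper point also contains a lower point. If two lower points of $f$ lie in the same component, then $x_n f=0$. This is the dual form of \Cref{lem:helper-x-f}(a), applied to the coarsenings of $\id_{[n]}$ that merge those two points; equivalently, $x(\id_{[n]})$ is orthogonal to proper coarsenings. Since $e_n,\omega_n,x_n$ commute, $ef=0$ in this case. So I only need $f$ whose $n$ lower points lie in $n$ distinct components. Such an $f$ amounts to the following data:
\begin{itemize}
\item a decomposition of the upper points $\{1,\dots,a\}$ into at most $n$ nonempty blocks;
\item an injective assignment of these blocks to lower points, where the remaining lower points are singletons;
\item $G$-labels on each component, up to one group element per component.
\end{itemize}
Precomposing with $e_n$ removes the dependence on the assignment to lower points. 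Precomposing with $\omega_n$ removes the dependence on the relative label between each lower point and its block, since $\omega_n$ averages the lower labels. Hence $ef$ depends, up to a scalar, only on a $G$-labelled set partition $p$ of $\{1,\dots,a\}$ with at most $n$ blocks, with labels taken up to $G$ on each block. Therefore
$$\dim\cC^+_t([a],X)\le N_a:=\#\{G\text{-labelled set partitions of }\{1,\dots,a\}\text{ with at most }n\text{ blocks}\}.$$

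\emph{The target.} The target is $\Hom_{G_n}(W^{\o a},\kk)$. Its dimension is the number of $G_n$-orbits on $(\{1,\dots,n\}\times\{1,\dots,m\})^a$, because $W$ is a permutation module. An orbit is determined by which positions share a row, which gives a set partition with at most $n$ blocks, because $S_n$ permutes rows. Within each block, the entries are determined up to the regular $G$-action on that row, which gives exactly a $G$-labelled set partition with at most $n$ blocks. So the target has dimension $N_a$, with basis the orbit indicators $\chi_p$.

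\emph{The key computation.} It then suffices to show that $F(\pi' e f_p)$ is a nonzero multiple of $\chi_p$, where $f_p$ is a representative of $p$. This gives injectivity on the spanning set and surjectivity, and hence an isomorphism with $\dim\cC^+_t([a],X)=N_a$. For the computation, I would move $\pi'$ through the idempotents.
\begin{itemize}
\item Since $\pi'$ is invariant under the $S_n$ action and under relabelling lower points, $\pi' e_n=\pi'$ and $\pi'\omega_n=\pi'$ up to the normalisations.
\item Next I would show $\pi' x_n f_p=x(\pi' f_p)$. Composing $\pi'$ with the coarsenings of $\id_{[n]}$ produces exactly the coarsenings of $\pi'f_p$ in which only blocks meeting distinct lower points get merged. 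Because every block of $f_p$ meets a lower point, these are all coarsenings, and no closed components appear, so no powers of $t$ arise.
\item Finally, by \Cref{lem:helper} and Möbius inversion on supports, $F(x(g))$ for $g\in P_{0,a}$ is the sum over $(i,j)\in\supp(g)$ with distinct blocks placed in distinct rows. This is precisely $\chi_p$, or a positive multiple of it.
\end{itemize}

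I expect the main obstacle to be this last bookkeeping step. It requires justifying that $\pi' x_n f_p=x(\pi'f_p)$ despite lower points that are singletons of $f_p$. It also requires checking that the scalar produced by $e_n$ and $\omega_n$ is a nonzero rational number, that is, some $|G|^{-k}$ and a stabiliser-size factor, independent of $t$. If the direct identity is awkward, I would instead argue triangularly: $F(\pi'ef_p)\in c\,\chi_p+\sum_{p'<p}\kk\chi_{p'}$ with $c\neq0$, ordered by coarsening. This already suffices for bijectivity.
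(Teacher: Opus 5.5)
Your strategy is the paper's. You kill diagrams with two connected lower points using $x_n$, normalize the rest with $e_n$ and $\omega_n$ so that $ef$ depends only on $p=\hat f$, and match the images with the orbit basis $F(x(\hat f))$ of $\Rep(G\wr S_n)([a],\one)$. The paper takes that basis from \Cref{thm:interpolation-kernel}; your orbit count plus M\"obius inversion on supports is a correct, self-contained substitute.

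The gap is at exactly the step you flagged, and the justification you give there is false. When $p$ has $k<n$ blocks, $f_p$ has $n-k$ lower points that are singleton components, and these \emph{do} close up against $\pi'$. In addition, the coarsenings of $\id_{[n]}$ that merge a free lower point into a block of $p$, or merge free points with each other, contribute with signs. So $\pi' x_n f_p = x(\pi' f_p)$ fails. Already for $G$ trivial, $n=2$, $a=0$ and $f_p$ the two lower singletons, you get $\pi' x_2 f_p = t^2-t$, whereas $x(\pi' f_p)=\pi' f_p=t^2$. The scalar also does not come from $e_n$ or $\omega_n$: $\pi' e_n=\pi'=\pi'\omega_n$ holds exactly. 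It comes from $x_n$ together with these closed components. So the nonvanishing of the coefficient of $\chi_p$, which is the real content of the lemma, is established neither in your main line nor in your triangular fallback, which needs the same coefficient.

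The clean fix is to compute after applying $F$, working at parameter $n$:
\begin{itemize}
\item By the same M\"obius inversion as in your last bullet, $F(x_n)$ is the projection of $W^{\otimes n}$ onto the span of the basis tensors $E_{i_1,j_1}\otimes\dots\otimes E_{i_n,j_n}$ with pairwise distinct rows $i_1,\dots,i_n$.
\item $F(\pi')$ is the functional that equals $1$ on every basis tensor.
\item $F(f_p)$ sends a basis tensor of $W^{\otimes a}$ lying in the support of $p$ to the sum over all admissible lower tuples. The $k$ lower points attached to blocks of $p$ are determined, while the $n-k$ singleton lower points range freely over all $n|G|$ positions.
\end{itemize}
Now count the free choices whose rows are distinct from each other and from the rows of the blocks. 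There are $(n-k)!\,|G|^{n-k}$ of them if the $k$ blocks sit in distinct rows, and none otherwise. Hence
\[
F(\pi' e f_p)=F(\pi' x_n f_p)=(n-k)!\,|G|^{n-k}\,\chi_p ,
\]
which is nonzero in characteristic $0$, and your spanning and dimension argument then finishes the proof. (The paper's proof writes $\pi' e f = x(\hat f)$ at this point, which omits the same factor; only its nonvanishing matters.)
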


\begin{proof} Consider a $G$-partition $f$ in $P_{a,n}$ that has no upper components. If $f$ has a component with two lower points, then $x_n f=0$ by \Cref{lem:helper-x-f}(a), so $ef=0$. So we may assume any component of $f$ has exactly one lower point. We may also assume the $i$-th lower point is connected to the $i$-th component whose left-most upper point is the $i$-th left-most upper point of any component, because for all $\sigma\in S_n$, $e_n \sigma=e_n$, so $e\sigma f=ef$. Finally, we may assume the $G$-labels of all lower points in $f$ and of all left-most upper points in each component are trivial, i.e., the identity element in $G$. Indeed, to achieve this for the left-most upper points in each component, a suitable representative can be chosen within the respective equivalence class, and then $\omega_n f$ does not depend on the labels of all lower points, so $ef$ does not, either. To sum up, we have established that the set $\{ef\}$, for $G$-partitions $f\in P_{a,n}$ such that
\begin{itemize}
\item all components of $f$ have exactly one lower point,
\item all lower points and all left-most upper points in each component have trivial $G$-label,
\item the lower points and the left-most upper points of all components have the same ordering,
\end{itemize}
are a spanning set of the hom-space on the left-hand side.

Consider now $\{F(\pi' e f)\}$, for $f$ as above. As $\pi' e_n=\pi'=\pi' \omega_n$ we have $\pi' e f=\pi' x_n f = x(\hat f)$, where $\hat f\in P_{a,0}$ is obtained from $f$ by removing all lower points. In other words, the set $\{F(\pi'ef)\}$ for $f$ as above is exactly the set 
$$
\{ F( x(\hat f) ):  \hat f\in P_{a,0} \text{ with $\le n$ components} 
\}
$$
which is a basis of the hom-space $\Rep(G\wr S_n)([a],\one)$ by \Cref{thm:interpolation-kernel}.

As each $G$-partition $f$ as above determines uniquely a $G$-partition $\hat f$ as above, and vice versa, we have shown the assertion.
\end{proof}

\begin{remark} \label{rem:comparison-Mori} The restriction functor we consider seems to agree with (a suitable special case of) the functor discussed in \cite{Mori}*{Section~4.4}. In \emph{loc.~cit.}, a so-called $*$-product is defined as an interpolation version of the induction functor. It seems to be implied that this $*$-product is indeed a functor, which then would be good candidate for the adjoint functor whose existence we prove in \Cref{thm:res}. However, no claims seem to be made regarding the $*$-product being the adjoint of the restriction functor. For $G=\{1\}$, that is, $\cC_t=\uRep(S_t)$ the relevant induction functors were considered in \cite{Repcomplexrank}*{Section~2.3}. In this case, their existence was also shown using ultraproduct methods in \cite{HK}*{Corollary/Definition~1.4.5}.
\end{remark}

\subsection{Abelian envelopes and connections to related theories} We assume that $G$ is a fixed finite group. As an application of our theory of restriction and induction functors, we show the existence of abelian envelopes for the interpolation categories $\cC_t=\Ps(\cG_t)$ for the groups $(G\wr S_n)_{n\ge0}$.

We recall the relevant definition from \cites{CEAH,CEOP}: for a pseudo-tensor category $\cC$ in the sense of \Cref{sec:lsm-functors}, an \emph{abelian envelope} is a faithful linear monoidal functor $\iota\:\cC\to\cT$, where $\cT$ is a tensor category in the sense of \Cref{sec:lsm-functors}, such that for every tensor category $\cT$, restriction along $\iota$ induces an equivalence between the category of exact linear monoidal functors of the form $\cT\to\cT'$ and the category of faithful linear monoidal functors of the form $\cC\to\cT'$. In the same situation, $\iota$ is called \emph{symmetric abelian envelope} if $\cC$, $\cT$, and $\iota$ are symmetric. In this case, $(\cT,\iota)$ satisfy a version of the above universal property with respect to symmetric functors (see \cite{CEOP}*{Lemma~2.5.1}).

In \cite{FG}, it was shown that some of the tensor envelopes in the sense of Knop (see \Cref{sec:Knop}) can be viewed as categories of tilting modules in a highest weight category whose underlying abelian category is the abelian envelope of the tensor envelope, assuming this abelian envelope exists. For the definitions related to highest weight categories we refer to \cite{BS}.

\begin{theorem} \label{thm:abenv} Assume that $\kk$ is algebraically closed of characteristic $0$. Then, for all $t\in\kk$, the categories $\cC_t=\Ps(\cG_t)$ have symmetric abelian envelopes with enough projectives. These abelian envelopes are lower finite highest weight categories in the sense of \cite{BS} such that $\cC_t$ is the subcategory of tilting objects 
\end{theorem}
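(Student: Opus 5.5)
The plan is to derive \Cref{thm:abenv} from \Cref{thm:res} by the general criterion of \cite{AbEnv1}. That criterion says, roughly, the following. Suppose a pseudo-tensor category $\cC$ admits a family of LSM functors $\cC\to\cD_n$ into categories which are known to have abelian envelopes, each with both a left and a right adjoint. Suppose further that these functors jointly detect enough of $\cC$: the induced objects $X_n$ generate (a splitting of the identity / "enough projective-type" objects). Then $\cC$ has an abelian envelope, realized as a category of modules, with enough projectives.

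\textbf{Step 1: the target categories.} The targets $\cD_n=\cC_{t-n}\boxtimes\Rep(G\wr S_n)$ are handled as follows.
\begin{itemize}
\item For $t-n\notin\ZZ_{\ge0}$, $\cC_{t-n}$ is semisimple by \Cref{thm:interpolation-kernel}, so $\cD_n$ is already a semisimple tensor category and is its own abelian envelope.
\item For $t\notin\ZZ_{\ge0}$, $\cC_t$ itself is semisimple and there is nothing to prove.
\item For $t\in\ZZ_{\ge0}$, choose $n=t$. Then $t-n=0$, and $\cC_0$ is semisimple with semisimplification $\Rep(G\wr S_0)=\mathrm{Vec}$. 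Hence the target is just $\cC_0\boxtimes\Rep(G\wr S_t)$, a semisimple tensor category.
\end{itemize}
So in every case we may take a single $n$ with semisimple target.

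\textbf{Step 2: apply the criterion.} By \Cref{thm:res}, $\Res_n$ is an LSM functor with a simultaneous left and right adjoint $\mathrm{Ind}$, sending $\one\mapsto X$. I would invoke the theorem from \cite{AbEnv1} asserting that in this situation the category of $\mathrm{Ind}(\one)$-comodules (equivalently, finitely presented modules in the sense of \cite{Cou-ab}) is a tensor category. The conditions to verify are:
\begin{itemize}
\item $\cC_t$ is a pseudo-tensor category, i.e.\ hom-finite, with $\operatorname{End}(\one)=\kk$, and rigid, since $V$ is self-dual. This is immediate from the diagrammatics.
\item $\Res_n$ is faithful on the relevant hom spaces, or at least detects the splitting needed.
\end{itemize}
The key point is faithfulness. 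I would prove it via the filtration by number of components used in the proof of \Cref{thm:res}. A $G$-diagram $f$ has a decomposition in $\Decomp(f)$ putting everything into the first tensor factor. On associated graded pieces this gives an injective map, since $\cC_{t-n}$ has the same diagram basis and its composition differs only in the parameter. Hence $D$ is faithful.

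\textbf{Step 3: faithfulness of $\Res_n$ and the highest weight structure.} Faithfulness of $\Res_n=(\Id\boxtimes F)D$ needs more care, because $F$ kills negligibles. I expect this to be the main obstacle. To handle it, I would show that for each nonzero morphism $h$, some decomposition term survives. Take the maximal $G$-diagram in $h$ with respect to the order from the proof of \Cref{thm:interpolation-kernel}. Its decomposition with all components in the first factor contributes a nonzero term which no other diagram cancels.

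The highest weight structure and the identification of tilting objects then follow from \cite{FG}. Their result applies to Knop-type tensor envelopes once the abelian envelope exists, and by the appendix $\cC_t$ is such an envelope. There, the standard objects are obtained as induced objects from the semisimple target, via the adjunction. Enough projectives comes from $\mathrm{Ind}$ of projectives in $\cD_n$, using that $\mathrm{Ind}$ is left adjoint to an exact functor.
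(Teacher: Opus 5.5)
\textbf{The gap is in Step~1, in the case $t\in\ZZ_{\ge0}$.} With $n=t$ the target is $\cC_0\boxtimes\Rep(G\wr S_t)$, and $\cC_0$ is \emph{not} semisimple. Since $0\in\ZZ_{\ge0}$, \Cref{thm:interpolation-kernel} only says that $\Rep(G\wr S_0)=\mathrm{Vec}$ is the \emph{semisimplification} of $\cC_0$. In other words, $\cC_0$ has a nonzero ideal of negligible morphisms; for instance $\id_{[1]}=x(\id_{[1]})$ is negligible, since its diagram has $1>0$ components. Concretely, let $e$ be the $G$-diagram in $\cC_0([1],[1])$ whose upper and lower points lie in different components. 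Then $e\circ h\in\kk e$ and $h\circ e\in\kk e$ for every $h\in P_{1,1}$. Moreover $e\circ e=0$, because the closed component that appears carries a factor vanishing at $t=0$. So $\kk e$ is a nonzero nilpotent ideal, and $\operatorname{End}([1])$ is not semisimple.

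Your reduction therefore lands on a target of the same type as the category you started with. Saying it is ``known to have an abelian envelope'' would be the case $t=0$ of the theorem itself; for $t=0$ (so $n=0$) the argument is literally circular.

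The repair, which is what the paper does, is to take $n=t+1$. Then $t-n=-1\notin\ZZ_{\ge0}$, so $\cC_{-1}$ is semisimple by \Cref{thm:interpolation-kernel}. The product $\cC_{-1}\boxtimes\Rep(G\wr S_{t+1})$ is then semisimple because $\kk$ is algebraically closed (Theorem~5.11 of \cite{FLP-cob}). This is where the algebraic-closedness hypothesis enters, and your proposal never uses it. The rest then goes through:
\begin{enumerate}
\item Feeding the right adjoint from \Cref{thm:res} into Theorem~B of \cite{AbEnv1} gives the envelope with enough projectives.
\item Theorem~C of \cite{AbEnv1}, which realizes the envelope as finitely presented functors on splitting objects, gives that it is symmetric.
\item \cite{FG}, via the Knop identification, gives the highest weight and tilting statements, as you say.
\end{enumerate}

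Conversely, the step you flag as the main obstacle is not one. The paper does not check faithfulness of $\Res_n$ separately, and in any case it is immediate. The component of $\Res_n(h)$ from $[a]\boxtimes[0]$ to $[b]\boxtimes[0]$ receives only the decomposition $f_1=f$, $f_2=\emptyset$ of each diagram $f$. So this component is just $h$ read in $\cG_{t-n}$ (tensored with $\id_{\one}$), and $G$-diagrams remain a basis there for every parameter. No argument about negligible morphisms or maximal diagrams is needed. Likewise, the ``family of functors jointly detecting enough of $\cC$'' is unnecessary: a single functor with a right adjoint into a semisimple category suffices.
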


\begin{proof} If $t\in\kk\setminus\ZZ_{\ge0}$, then the categories are semisimple tensor categories by \Cref{thm:interpolation-kernel}. Assume now $t\in\ZZ_{\ge0}$ and consider the restriction functor
$$
\cC_t \to \cC_{-1} \boxtimes \Rep (G\wr S_{t+1})
$$
as constructed in \Cref{sec:diagonal}. It is a linear monoidal functor with a right adjoint by \Cref{thm:res}. The category on the right-hand side is semisimple, as both factor categories are semisimple, and we assume $\kk$ to be algebraically closed, using \cite{FLP-cob}*{Theorem~5.11}. Now it follows from \cite{AbEnv1}*{Theorem~B} that $\cC_t$ has an abelian envelope with enough projectives. 

By \cite{AbEnv1}*{Theorem~C}, the abelian envelope is realized by category of finitely presented functors from the category of splitting objects in $\cC_t$ to abelian groups. In particular, this category and the embedding of $\cC_t$ are symmetric.

The abelian envelope is a lower finite highest weight category and $\cC_t$ can be identified with the subcategory of tilting objects by \cite{FG}*{Theorem~A}, using the equivalence of $\cC_t$ with a Knop tensor envelope shown in \cite{LS}*{Theorem~9.5} (see also \Cref{thm:equiv-2}).
\end{proof}

\begin{remark} \label{rem:HS} We describe an alternative strategy for establishing the existence of the abelian envelope via the theory of generalized permutation modules of pro-oligomorphic groups developed in \cite{HS}: by \cite{Sno}, there is a pro-oligomorphic group $G'$ and a measure $\mu$ on $G'$ in the sense of \cite{HS} associated to $\cC_t$, which can be viewed as a Knop tensor envelope by \cite{LS}*{Theorem~9.5} (or \Cref{thm:equiv-2}); see also \cite{FG}*{Remark~5.19}, where this general idea is explained.
Then by \cite{FG}*{Proposition~5.20}, it would suffice to show that $\mu$ is quasi-regular and has property (P) in the sense of \cite{HS} to see that $\cC_t$ has an abelian envelope. The abelian envelope would then be realized by a suitable category of modules over a certain completed group algebra in the sense of \cite{HS}.  
\end{remark}

\begin{remark} In \cite{Mori}, a class of ``wreath product'' interpolation categories $\cS_t(\cC)$ with parameter $t$, for a fixed suitable monoidal category $\cC$, is constructed that generalize the categories $\cC_t$, see \cite{Mori}*{Remark~4.25}. It would be interesting to see to which extent our theory of restriction and induction functors generalizes to these categories $\cS_t(\cC)$.
\end{remark}

\appendix
\section{Equivalences of categories} 
\label{sec:equivalences}

Fix a finite group $G$. In this appendix, we explain that the category constructed from $G$-partitions as in \Cref{sec:G-partitions} can equivalently be viewed as a category constructed from edge-labeled partition diagrams in the sense of Bloss \cite{Bloss}, or as a special case of a tensor envelope in the sense of Knop \cite{Knop}. These equivalences have been observed before, including in \cite{LS} and \cite{Mori}. We show how explicit functors realizing the equivalences can be obtained using the techniques from \Cref{sec:lsm-functors}.

\subsection{Vertex-labeled diagrams and edge-labeled diagrams}
We review the definition of a different type of $G$-labeled partition diagram, following \cite{Bloss}.

\begin{definition}
    \label{def:edge.-colored}
    An \emph{edge-labeled $G$-diagram} is an equivalence class of string diagrams where each edge is an arc between exactly two vertices, edges are labeled by elements from $G$, and each edge has an orientation (i.e.~direction). Two diagrams are considered equivalent if the underlying partition diagrams are equivalent and the edge labels differ locally according to following relations:
$$
   	\begin{tikzpicture}[scale=0.7]
   		\tikzstyle{every node}=[font=\small,inner sep=2pt]
   		\filldraw [black] (0,0) circle [radius=3pt];
   		\filldraw [black] (1,0) circle [radius=3pt];
   		
   		\filldraw [black] (3,0) circle [radius=3pt];
   		\filldraw [black] (4,0) circle [radius=3pt];
   		
   		\draw (2,0) node {$\sim$};
   		\begin{scope}[scale=1,
   			every path/.style={
   				postaction={nomorepostaction,decorate,
   					decoration={markings,mark=at position 0.5 with {\arrow{>}}}
   				}
   			}
   			]
   			
   			\draw (0,0)-- node[above]{$c$} (1,0);
   			\draw (4,0)-- node[above]{$c^{-1}$} (3,0);
   		\end{scope}
   	\end{tikzpicture}
   \qquad\text{and}\qquad
   	\begin{tikzpicture}[scale=0.7]
   		\tikzstyle{every node}=[font=\small,inner sep=2pt]
   		\filldraw [black] (0,0) circle [radius=3pt];
   		\filldraw [black] (1,0) circle [radius=3pt];
   		\filldraw [black] (0,-1) circle [radius=3pt];

   		\filldraw [black] (3,0) circle [radius=3pt];
   		\filldraw [black] (4,0) circle [radius=3pt];
   		\filldraw [black] (3,-1) circle [radius=3pt];
   		\draw (1.7,-0.5) node {$\sim$};
   		\begin{scope}[scale=1,
   			every path/.style={
   				postaction={nomorepostaction,decorate,
   					decoration={markings,mark=at position 0.5 with {\arrow{>}}}
   				}
   			}
   			]
   			\draw (0,0)-- node[above]{$c_2$} (1,0);
   			\draw (0,-1)-- node[left]{$c_1$} (0,0);

   			\draw (3,0)-- node[above]{$c_2$} (4,0);
   			\draw (3,-1)-- node[left]{$c_1$} (3,0);
   			\draw (3,-1)-- node[below,xshift=15pt]{$c_1 c_2$} (4,0);
   		\end{scope}
   	\end{tikzpicture} \qquad.
$$
   We also exclude all diagrams which are equivalent to a diagram containing edges as follows, for $c_1c_2\neq c_3$:\footnote{This condition in the definition seems to be implicit, but missing in \cite{Bloss}*{Section~6.2}. For instance, this can be seen from the asserted number of $(G,k)$-diagrams, in the terminology of the reference, see \cite{Bloss}*{bottom of page 704}.}
$$
   		\begin{tikzpicture}[scale=0.7]
   		\tikzstyle{every node}=[font=\small,inner sep=2pt]

   		\filldraw [black] (3,0) circle [radius=3pt];
   		\filldraw [black] (4,0) circle [radius=3pt];
   		\filldraw [black] (3,-1) circle [radius=3pt];
   		
   		\begin{scope}[scale=1,
   			every path/.style={
   				postaction={nomorepostaction,decorate,
   					decoration={markings,mark=at position 0.5 with {\arrow{>}}}
   				}
   			}
   			]

   			\draw (3,0)-- node[above]{$c_2$} (4,0);
   			\draw (3,-1)-- node[left]{$c_1$} (3,0);
   			\draw (3,-1)-- node[below,xshift=15pt]{$c_3$} (4,0);
   		\end{scope}

   	\end{tikzpicture}
    \qquad.
$$

\end{definition}
We can define two operations for edge-labeled $G$-diagrams as in \Cref{sec:interpolation}. The tensor product $f\o g$ for two such diagrams is the horizontal juxtaposition, and if the number $m$ of lower points of $f$ agrees with the number of upper points of $g$, then the $t$-composition $g\circ f$, for any $t\in\kk$, is defined as the equivalence class of diagrams obtained by gluing along the $m$ common points, multiplied by a scalar. Now, the scalar is $0$ if the edges coming from $f$ and those coming from $g$ are not compatible, and is $t^\ell$, where $\ell$ is the number of components that are not connected to upper and lower points after gluing, and that are removed. 

To describe the compatibility more precisely, consider any pair $i,j$ with $i<j$ of lower points of $f$, which are identified with upper points of $g$. If $i,j$ are not in the same component in either $f$ or $g$, there is no condition. If they are in the same component in $f$ and $g$, then we may assume there is an edge connecting $i$ and $j$ in both $f$ and $g$, as such an edge can be added with a unique edge label not changing the equivalence class of $f$ or $g$, respectively. We may also assume the edges have the same orientation, from $i$ to $j$, as we may invert the direction of any edge when also inverting the edge label. Now the condition is that the two edge labels coming from $f$ and $g$ agree for the pair $i,j$.

For instance, we get:
 $\begin{tikzpicture}[scale=0.7,baseline=0]
			\tikzstyle{every node}=[font=\small,inner sep=2pt]
			\filldraw [black] (0,0) circle [radius=3pt];
			\filldraw [black] (1,0) circle [radius=3pt];
			
			\begin{scope}[scale=1,
				every path/.style={
					postaction={nomorepostaction,decorate,
						decoration={markings,mark=at position 0.5 with {\arrow{>}}}
					}
				}
				]
				
				\draw (0,0) to[bend left=50] node[above]{$k$} (1,0);
				\draw (0,0) to[bend right=50] node[below]{$l$} (1,0);
			\end{scope}
		\end{tikzpicture} = \delta_{k,l} t$. 

In can be seen as in \Cref{sec:interpolation}, and is explained in \cite{Bloss}*{Section~6.2}, that these are well-defined operations. Let $E_{a,b}$ be the set of edge-labeled $G$-diagrams for all $a,b\ge0$. Let us interpret the $G$-diagrams $\id_a$ and $\Psi_{a,b}$ constructed in \Cref{sec:interpolation} also as edge-labeled $G$-diagrams with trivial edge-colors (i.e.~edge color $1\in G$ for all edges). It then follows that for any $t\in\kk$, there is a unique $\kk$-linear symmetric strict monoidal category $\cE_t$ whose objects are the set $\{[a]\}_{a\ge0}$ and whose morphism spaces are $\cE_t([a],[b])=\kk E_{a,b}$, with identities $\{\id_a\}_{a\ge0}$, with composition and tensor product as above, with tensor unit $[0]$, and with a symmetric braiding given by $\Psi_{a,b}$.  

\begin{proposition} \label{thm:equiv-1} $\cG_T$ and $\cE_t$ are isomorphic (in particular, equivalent) as LSM categories. 
\end{proposition}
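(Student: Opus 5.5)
The plan is to construct mutually inverse LSM functors via \Cref{prop:functors}, applied to the self-dual generating objects $V=[1]$ in $\cG_t$ and $V'=[1]$ in $\cE_t$. Both use the evaluation $\tp{1,2}$ and coevaluation $\tp{11,12}$ with trivial labels. Since $\cG_t$ and $\cE_t$ are generated by tensor powers of these objects, $\langle V\rangle_\o=\cG_t$ and $\langle V'\rangle_\o=\cE_t$. So it suffices to give a mapping of invariants $F_\one\:\Inv(V)\to\Inv(V')$ that is bijective in each degree. Fullness and faithfulness then follow from the last assertion of \Cref{prop:functors}, and bijectivity on objects is clear, so the functor is an isomorphism.

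\textbf{Step 1: the bijection $P_{0,d}\to E_{0,d}$.} Take a vertex-labeled $G$-diagram $f$ with labels $(k_1,\dots,k_d)$. Send it to the edge-labeled diagram $\hat f$ with the same underlying partition in which, for each pair $p<q$ in a common component, there is an edge from $p$ to $q$ labeled $k_p^{-1}k_q$. Rescaling all labels of a component by $g\cdot$ does not change these quotients, so $\hat f$ is well defined. The triangle relations $c_1c_2=c_3$ hold automatically, so $\hat f$ is not an excluded diagram.

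Conversely, an edge-labeled diagram satisfying the cocycle condition determines labels on each component up to a global left multiple: fix the label of one vertex and propagate along edges. The two relations in \Cref{def:edge.-colored} say exactly that equivalent edge diagrams have the same "transition" labels $k_p^{-1}k_q$ between all pairs in a component. So $f\mapsto\hat f$ is a bijection. It matches the combinatorics of the support in \Cref{def:supp}, and should be stated in that form.

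\textbf{Step 2: the axioms of \Cref{def:mapping-invariants}.} Extend the bijection linearly to $F_\one$. It is multiplicative, since juxtaposition is preserved, and $S_d$-equivariant, since permuting points permutes the edges, with orientation flips absorbed by inverting labels. It sends $\coev$ to $\coev$, because an edge labeled $1^{-1}1=1$ is the trivial edge.

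For compatibility with $\ev$, compare the composition $(\ev\o V^{\o(d-2)})f$ in both categories, using the case analysis in the proof of \Cref{lem:helper}. Suppose points $1,2$ are in the same component. In $\cG_t$ the result is nonzero iff the labels can be made equal, i.e.\ $k_1^{-1}k_2=1$. In $\cE_t$ the result is nonzero iff the edge label from $1$ to $2$ agrees with the trivial label of the cap. These conditions coincide. In both categories one gets a factor $t$ exactly when the component consists only of $\{1,2\}$, and otherwise the merged diagram $\gamma(f)$. If points $1,2$ lie in different components, the cap merges them. In $\cG_t$ one relabels one component so that $k_1=k_2$. In $\cE_t$ this corresponds to a new edge labeled $1$ between $1$ and $2$, whose transitive closure yields the labels $k_p^{-1}k_q$ after relabeling. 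So $\widehat{\gamma(f)}=\gamma(\hat f)$ and the two sides agree.

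\textbf{Main obstacle.} The delicate part is Step 2's merging case, and more fundamentally the well-definedness of the edge calculus itself. We should rely on \cite{Bloss}*{Section~6.2}, which asserts that composition in $\cE_t$ is well defined, together with the corrected exclusion condition. It remains to check carefully that the induced edge labels on a merged component are exactly those coming from the relabeled vertex diagram. I would argue this by reducing any edge diagram to a spanning tree of edges per component: by the second relation in \Cref{def:edge.-colored}, equivalence classes are determined by tree labels, which correspond bijectively to vertex labels modulo left multiplication. Once this holds, \Cref{prop:functors} gives an LSM functor $\cG_t\to\cE_t$ that is the identity on objects and bijective on hom-spaces, hence an isomorphism.
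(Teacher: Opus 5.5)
Your proposal is correct and is essentially the paper's own proof: the same trivially labeled (co)evaluations on $[1]$, the same bijection sending vertex labels to edge labels $k_p^{-1}k_q$, and the same appeal to \Cref{prop:functors}; your case analysis of post-composing with the cap supplies the verification that the paper leaves as ``it can be checked''. One point needs care, and the paper's sketch passes over it too: the paper writes the $t$-composition in $\cG_t$ with the factor $(mt)^{\mu(f,g)}$, whereas $\cE_t$ uses $t^\ell$, so your claim that a closed $\{1,2\}$ component gives the same factor $t$ on both sides only holds once these loop normalizations are matched (taken literally, your map identifies $\cG_t$ with $\cE_{mt}$).
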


\begin{proof}[Proof sketch] Recall from \Cref{sec:interpolation} that 
$$
V=[1], \ev_V = \tp{1,2},\quad \coev_V = \tp{11,12}
\qquad\text{in }\cG_t.
$$
We now set 
$$
U:=[1], \ev_U := \tp{1,2},\quad \coev_U := \tp{11,12}
\qquad\text{in }\cE_t,
$$
where the two diagrams have trivial edge labels (and hence, the choice of orientation does not matter). It follows immediately that $(U,\ev_U,\coev_U)$ defines a self-dual object in $\cE_t$.
By \Cref{prop:functors}, and as any object in $\cG$ and $\cE$ is a tensor power of $V$ and $U$, respectively, it suffices to construct a bijective mappings of invariants between the two categories with respect to the self-dual objects $(V,\ev_V,\coev_V)$ and $(U,\ev_U,\coev_U)$.

Define $\psi\:\Inv(V)\to\Inv(U)$ by sending a $G$-diagram $f$ with no upper points to the edge-labeled diagram $\psi(V)$ that has the same underlying partition diagram, with an arbitrary choice of orientation for all edges, and with edge labels given by $k_1^{-1} k_2$ for any edge whose start vertex has label $k_1\in G$ and whose end vertex has label $k_2\in G$ in $f$. It can be checked that this indeed yields a bijective mapping of invariants.
\end{proof}

\subsection{Vertex-labeled diagrams and Knop's tensor envelopes} \label{sec:Knop}
In \cite{Knop}, a general class of LSM categories are discussed whose construction is based on a calculus of relations for regular categories. We briefly review this construction.

Let $\cA$ be a subobject-finite regular exact Mal'cev category (in the sense of \cite{Knop}) whose terminal object $*$ has no proper subobject. Monomorphisms in $\cA$ are called \emph{injective} and extremal epimorphisms in $\cA$ are called \emph{surjective}.
Let $\delta$ be a \emph{degree function} for $\cA$ in the sense of \cite{Knop}, that is, essentially, a $\kk$-valued function on surjective morphisms in $\cA$ that is multiplicative with respect to compositions, invariant under pullbacks, and assigns the value $1$ to identities.

For any $A,B\in\cA$, let $\Rel_{A,B}$ be the set of \emph{relations}, i.e., subobjects of $A\times B$, where subobjects are injective maps up to an isomorphism of the source. An element in $\Rel_{A,B}$ can be viewed as a jointly injective span
$$
A\xleftarrow{} \cdot\xrightarrow{} B ,
$$
i.e., a span for which the induced map $\cdot\to A\times B$ is injective.
For any $A$, there is a distinguished element 
$$
\Delta_A = A \xleftarrow{\id} A \xrightarrow{\id} A \quad\in\Rel_{A,A} .
$$
For any pair of relations $f\in\Rel_{A,B}$, $g\in\Rel_{B,C}$, constructing a pullback along the maps to $B$, if possible, yields a span of the form $X\xleftarrow{} R\xrightarrow{} Y$, i.e., a morphism $h$ of the form $R\xrightarrow{}X\times Z$. An operation $\circ\:\kk\Rel_{Y,Z}\o_\kk \kk\Rel_{A,B}\to\kk\Rel_{X,Z}$ is defined by setting
$$
g\circ f :=\begin{cases} \delta(h_1) h_2 & \text{the above pullback exists} \\ 0 & \text{else} \end{cases} ,
$$
for all $f,g$ as above, where $(h_1,h_2)$ is the image factorization of $h$ in $\cA$, so $h_1$ is surjective and $h_2$ is injective.

Then $\cT^0:=\cT^0(\cA,\delta)$ is the $\kk$-linear symmetric strict monoidal category with
\begin{itemize}
\item objects: $[A]$ for isomorphism classes $A$ of objects in $\cA$;
\item hom-spaces: $\cT^0([A],[B]):=\kk\Rel_{A,B}$;
\item identities: given by $\Delta_A\in\Rel_{A,A}$,
\item composition: given by $\circ$ as defined above;
\item tensor product: given by the coproduct of relations, with tensor unit $[*]$;
\item symmetric braiding: given by spans of the form
$$
A\times B \xleftarrow{\id} A\times B \xrightarrow{(\pi_B,\pi_A)} B\times A .
$$
\end{itemize}

The pseudo-tensor category $\cT(\cA,\delta):=\Ps(\cT^0(\cA,\delta))$ is called \emph{Knop's tensor envelope} of $(\cA,\delta)$. In \cite{Knop}*{Section 9}, it is shown that many of these categories can be viewed as interpolation categories of families of finite groups, as well.

\begin{example} \label{expl:fin-set}
For instance, assume $\cA$ is the opposite category of the category of finite sets. Then, as is explained in \cite{Knop}, the degree functions $\delta$ are parametrized by elements $t\in\kk$, and for $t\in\kk$ with associated degree function $\delta_k$, the category $\cT^0(\cA,\delta_t)$ coincides with the category $\cS^0_t$, as in \Cref{bg:partitions}. 
More precisely, this equivalence is based on identifying jointly injective spans in $\cA$, the opposite category of the category of finite sets, with partition diagrams, as follows. Such a jointly injective span is the same as a jointly surjective cospan 
$$
A \xrightarrow{} C \xleftarrow{} B 
$$
of finite sets. We can view the sets $A,B$ as set of points and the set $C$ as a set of components. Then a jointly surjective cospan is the same as a partition of the set $A\sqcup B$, which is indeed the same as a partition diagram with upper points $A$, lower points $B$, and a set of components $C$.
\end{example}

Now let $\cA=(\GSet)\op$ be the opposite category of the category of finite free $G$-sets. Set $m:=|G|$. As explained in \cite{Knop}*{Section 8, Examples, 2.}, for any $t\in\kk$, this category has a degree function $\delta_t$ that assigns to an injective map $f$ of free $G$-sets of size $am$ and $bm$, respectively, the degree
$$
\delta_t(f) = t^{b-a} .
$$

Let $S$ be the free and transitive $G$-set with $m$ elements, i.e., the regular $G$-set. Then any finite free $G$-set is a coproduct $S^{\sqcup a}$ of $a$ copies of $S$, and is generated as a $G$-set by a choice of $a$ elements, one in each copy of $S$, for some $a\ge0$. Any $G$-map between free $G$-sets is given by the images such a fixed set of generators. Hence, any $G$-map $g\: S^{\sqcup a}\to S^{\sqcup b}$ is given by a map $set(g)\:\{1,\dots,a\}\to\{1,\dots,b\}$ indicating to which copy of $S$ in the target a given copy of $S$ in the source is mapped, and a function $rot(g)\:\{1,\dots,a\}\to G$ indicating the images of the generators in terms of the generator of the set $S$ to which they are mapped.

More formally, if $v_1,\dots,v_a$, $w_1,\dots,w_b$ are generators of $S^{\sqcup a}$ and $S^{\sqcup b}$, respectively, then 
$$
g(v_i) = rot(g)(i)\cdot w_{set(g)(i)}
\qquad\text{for all }i\in\{1,\dots,a\} .
$$

By definition, morphisms in $\cT^0(\cA,\delta_t)$ are linear combinations of cospans
$$
S^{\sqcup a} \xrightarrow{f_a} S^{\sqcup k} \xleftarrow{f_b} S^{\sqcup b}
\qquad\text{for }a,b,k\ge0
$$
that are jointly surjective. For any cospan as above, the cospan 
$$
\{1,\dots,a\}\xrightarrow{set(f_a)}\{1,\dots,k\}\xleftarrow{set(f_b)}\{1,\dots,b\}
$$ in the category of finite sets is jointly surjective, too, and hence, defines a partition diagram $\pi_{f_a,f_b}$ with $a$ upper and $b$ lower points (see \Cref{expl:fin-set}). Let $c_{f_a,f_b}$ be the coloring of this partition diagram given by the maps $rot(f_a)$ and $rot(f_b)$.

It can be verified that this construction yields a bijection between the relations in $\cA$ between the objects $S^{\sqcup a}$ and $S^{\sqcup b}$ and the set of $G$-diagrams with $a$ upper and $b$ lower points, i.e.,
\begin{equation} \label{eq:bijection}
\Rel_{S^{\sqcup a},S^{\sqcup b}} \leftrightarrow P_{a,b}
\quad\tforall a,b\ge0 .
\end{equation}

\begin{proposition}[\cite{LS}*{Theorem~9.5}]  \label{thm:equiv-2}  $\cG_t$ and $\cT^0(\cA,\delta_t)$ are isomorphic (in particular, equivalent) as LSM categories.
\end{proposition}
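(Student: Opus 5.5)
The plan is to follow the same strategy as in \Cref{thm:equiv-1}: use \Cref{prop:functors} together with the fact that every object of either category is a tensor power of a single self-dual generator. In $\cG_t$ take $(V,\ev_V,\coev_V)$ as before. In $\cT^0(\cA,\delta_t)$ take $U:=[S]$. Let $\ev_U\in\Rel_{S\sqcup S,*}$ be the relation corresponding under \eqref{eq:bijection} to $\tp{1,2}$, i.e.\ the cospan $S\sqcup S\xrightarrow{(\id,\id)}S\leftarrow\emptyset$ in $\GSet$, and let $\coev_U$ be its transpose. Here one must remember that the tensor product in $\cT^0$ is the product in $\cA$, i.e.\ the disjoint union of free $G$-sets, so $U^{\o d}=[S^{\sqcup d}]$ and the unit $[*]$ corresponds to $\emptyset$.

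\textbf{Step 1: $U$ is self-dual.} Compute the snake composite $(\ev_U\o U)(U\o\coev_U)$ directly. The pullback in $\cA$ is a pushout of free $G$-sets, and it produces the identity cospan with image factorization through an isomorphism, so $\delta_t=1$.

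\textbf{Step 2: the map on invariants.} Define $\psi\:\Inv(V)\to\Inv(U)$ on $\Inv_d(V)=\kk P_{0,d}$ via the bijection \eqref{eq:bijection} with $a=0$. By construction $\psi$ is a graded linear bijection. It is an algebra map because the tensor product of $G$-diagrams corresponds to the disjoint union of cospans, i.e.\ the coproduct of relations. It is $S_d$-equivariant because permuting lower points corresponds to precomposing $f_b$ with the permutation of the copies of $S$. It satisfies $\psi(\coev_V)=\coev_U$ by definition.

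\textbf{Step 3: compatibility with evaluation (main obstacle).} This is the only substantial check, namely that $\psi((\ev_V\o V^{\o(d-2)})u)=(\ev_U\o U^{\o(d-2)})\psi(u)$. Concretely, for a $G$-diagram $f\in P_{0,d}$, realised as a jointly surjective map $S^{\sqcup d}\to S^{\sqcup k}$, composing with $\ev_U$ amounts to forming a pushout in $\GSet$ identifying the first two copies of $S$ via their images. Two cases arise.
\begin{itemize}
\item If the first two points lie in different components, the pushout glues two copies of $S$ in $S^{\sqcup k}$ by a $G$-isomorphism. The result is the merged diagram $\gamma(f)$ with compatibly relabelled vertices, and the factorization is through an isomorphism, giving degree $1$.
\item If they lie in the same component, the pushout identifies $S$ with itself via left multiplication by $\theta$-difference of labels, compared with the identity. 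If the labels agree, this is the identity. If they differ, the coequalizer of two distinct $G$-maps $S\rightrightarrows S$ is not free, and one must check that in Knop's framework (the pullback in $\cA$ of the injective spans) this is exactly the case in which the composite is declared $0$. When the labels agree and the component consists only of these two points, that component disappears: the image factorization contains a surjection in $\cA$, i.e.\ an injection $\emptyset\to S$ of free $G$-sets, of degree $\delta_t=t^{1-0}=t$.
\end{itemize}
These cases match the case distinction computed in the proof of \Cref{lem:D-mapping-invariants}, so $\psi$ is a mapping of invariants. I expect the main effort to be in correctly translating "pullback exists" in $\cA$ and the image factorization into these combinatorial cases.

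\textbf{Step 4: conclusion.} By \Cref{prop:functors}, $\psi$ induces a fully faithful LSM functor $\cG_t\to\cT^0(\cA,\delta_t)$, since $\psi$ is bijective. It sends $[a]\mapsto[S^{\sqcup a}]$. Every finite free $G$-set is isomorphic to some $S^{\sqcup a}$ and objects of $\cT^0$ are isomorphism classes, so this functor is bijective on objects and hence an isomorphism of LSM categories.
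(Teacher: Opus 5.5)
Your strategy is the same as the paper's sketch: $[S]$ with the cospan (co)evaluations as self-dual generator, the linear extension of \eqref{eq:bijection} as a bijective mapping of invariants, and then \Cref{prop:functors} plus bijectivity on objects. Steps 1, 2 and 4 are fine.

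\textbf{The closed-component case of Step 3 fails as written.} You close the evaluation check by citing the case table in the proof that $D_\one$ is a mapping of invariants, instead of computing the diagram side from the definition of $t$-composition. By that definition, each closed component in $\cG_t$ is weighted by $mt$ with $m=|G|$. This is exactly what makes $F(V)=W$, with $\dim W=mn$, work at $t=n$. So if the first two points form a block with equal labels, then $(\ev_V\o V^{\o(d-2)})f=mt\,\gamma(f)$. On Knop's side you correctly get $\delta_t(\emptyset\hookrightarrow S)=t$. The two sides differ by a factor $m$.

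Rescaling $\psi$ or the (co)evaluations cannot fix this. An LSM isomorphism is monoidal and bijective on objects, so it must send $[1]$ to $[S]$, and it preserves categorical dimensions. But $\ev_V\coev_V=mt$ while $\ev_S\coev_S=t$. So for $m>1$ and $t\neq0$, your linearly extended bijection is not a mapping of invariants into $\cT^0(\cA,\delta_t)$. What the argument proves, once the parameters are matched, is $\cG_t\cong\cT^0(\cA,\delta_{mt})$; equivalently, Knop's degree must be normalized as $(mt)^{b-a}$. The paper's sketch and the case table you cite pass over the same normalization (the table writes $t$ where the definition yields $mt$). You should make the matching explicit rather than inherit the table.

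\textbf{Slip in your first bullet.} Suppose the first two points lie in different blocks that are both singletons. Then the copy of $S$ produced by the gluing is not hit by any of the remaining $d-2$ copies. The image factorization is therefore a proper injection $S^{\sqcup(k-2)}\hookrightarrow S^{\sqcup(k-1)}$, which contributes the closed-component factor, not degree $1$. On the diagram side the cap likewise closes off a component, so both sides carry that factor. The correct case split is:
\begin{itemize}
\item the union of the blocks containing the first two points is exactly these two points: closed-component factor (labels are automatically compatible for distinct blocks, and must agree within one block, otherwise $0$);
\item the union contains further points: factor $1$, or $0$ for unequal labels within one block.
\end{itemize}
With this split and matched parameters the two sides agree, and the rest of your proof goes through.
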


\begin{proof}[Proof sketch.]
Note that in the LSM category $\cT^0(\cA,\delta_t)$, the object $[S]$ is self-dual with evaluation and coevaluation given by
$$
\ev_S\: S^{\sqcup 2}\xrightarrow{(\id,\id)} S\xleftarrow{} \emptyset ,\quad
\coev_S\: \emptyset\xrightarrow{} S\xleftarrow{(\id,\id)} S^{\sqcup 2} .
$$
Indeed, this is explained at \cite{Knop}*{Equation~(3.15)}, and follows from a direct computation using that the pushout of the identity morphisms is given by identity morphisms.

As in the proof of \Cref{thm:equiv-1}, one can prove the assertion by constructing a bijective mapping of invariants between the two categories with respect to the self-dual objects $(S,\ev_S,\coev_S)$ and $(V,\ev_V,\coev_V)$. Indeed, one can verify that such a mapping $\Inv(S)\to\Inv(V)$ is obtained by extending the bijection from \Cref{eq:bijection} linearly. 
\end{proof}

\bibliography{bib}{}

@article {Knop,
    AUTHOR = {Knop, Friedrich},
     TITLE = {Tensor envelopes of regular categories},
   JOURNAL = {Adv. Math.},
  FJOURNAL = {Advances in Mathematics},
    VOLUME = {214},
      YEAR = {2007},
    NUMBER = {2},
     PAGES = {571--617},
      ISSN = {0001-8708,1090-2082},
   MRCLASS = {18E30 (20C15 20G05)},
  MRNUMBER = {2349713},
       DOI = {10.1016/j.aim.2007.03.001},
       URL = {https://doi.org/10.1016/j.aim.2007.03.001},
}

@article {Bloss,
    AUTHOR = {Bloss, Matthew},
     TITLE = {{$G$}-colored partition algebras as centralizer algebras of
              wreath products},
   JOURNAL = {J. Algebra},
  FJOURNAL = {Journal of Algebra},
    VOLUME = {265},
      YEAR = {2003},
    NUMBER = {2},
     PAGES = {690--710},
      ISSN = {0021-8693,1090-266X},
   MRCLASS = {20C99 (05E10)},
  MRNUMBER = {1987025},
MRREVIEWER = {Michael\ A.\ Zabrocki},
       DOI = {10.1016/S0021-8693(03)00132-7},
       URL = {https://doi.org/10.1016/S0021-8693(03)00132-7},
}

@article {CO-blocks,
    AUTHOR = {Comes, Jonathan and Ostrik, Victor},
     TITLE = {On blocks of {D}eligne's category {$\underline{\rm Re}{\rm
              p}(S_t)$}},
   JOURNAL = {Adv. Math.},
  FJOURNAL = {Advances in Mathematics},
    VOLUME = {226},
      YEAR = {2011},
    NUMBER = {2},
     PAGES = {1331--1377},
      ISSN = {0001-8708,1090-2082},
   MRCLASS = {20C30 (18E05)},
  MRNUMBER = {2737787},
MRREVIEWER = {Karin\ Erdmann},
       DOI = {10.1016/j.aim.2010.08.010},
       URL = {https://doi.org/10.1016/j.aim.2010.08.010},
}

@incollection {Deligne,
    AUTHOR = {Deligne, P.},
     TITLE = {La cat\'egorie des repr\'esentations du groupe sym\'etrique
              {$S_t$}, lorsque {$t$} n'est pas un entier naturel},
 BOOKTITLE = {Algebraic groups and homogeneous spaces},
    SERIES = {Tata Inst. Fund. Res. Stud. Math.},
    VOLUME = {19},
     PAGES = {209--273},
 PUBLISHER = {Tata Inst. Fund. Res., Mumbai},
      YEAR = {2007},
      ISBN = {978-81-7319-802-1},
   MRCLASS = {20C30 (18D10)},
  MRNUMBER = {2348906},
MRREVIEWER = {Karin\ Erdmann},
}

@mastersthesis{Thesis,
    author = {David Hull},
    title = {Restriction Functors on
Interpolation Categories},
    school = {University of Bonn},
    year = {2025}
}

@article {Repcomplexrank,
    AUTHOR = {Etingof, Pavel},
     TITLE = {Representation theory in complex rank, {I}},
   JOURNAL = {Transform. Groups},
  FJOURNAL = {Transformation Groups},
    VOLUME = {19},
      YEAR = {2014},
    NUMBER = {2},
     PAGES = {359--381},
      ISSN = {1083-4362,1531-586X},
   MRCLASS = {20C30 (20C05)},
  MRNUMBER = {3200430},
MRREVIEWER = {Bhama\ Srinivasan},
       DOI = {10.1007/s00031-014-9260-2},
       URL = {https://doi.org/10.1007/s00031-014-9260-2},
}

@article {FLP-cob,
    AUTHOR = {Flake, Johannes and Laugwitz, Robert and Posur, Sebastian},
     TITLE = {Indecomposable objects in {K}hovanov-{S}azdanovic's
              generalizations of {D}eligne's interpolation categories},
   JOURNAL = {Adv. Math.},
  FJOURNAL = {Advances in Mathematics},
    VOLUME = {415},
      YEAR = {2023},
     PAGES = {Paper No. 108892, 70},
      ISSN = {0001-8708,1090-2082},
   MRCLASS = {18M05 (05A18 17B10 18M30 57R56 81R05)},
  MRNUMBER = {4544563},
MRREVIEWER = {Benjamin\ Cooper},
       DOI = {10.1016/j.aim.2023.108892},
       URL = {https://doi.org/10.1016/j.aim.2023.108892},
}

@article{AbEnv1,
      title={Monoidal adjunctions and abelian envelopes}, 
      author={Johannes Flake and Robert Laugwitz and Sebastian Posur},
      year={2026},
      eprint={arXiv:2601.16092},
      primaryClass={math.RT},
      url={https://arxiv.org/abs/2601.16092}, 
}

@article {Flake-Laugwitz,
    AUTHOR = {Flake, Johannes and Laugwitz, Robert},
     TITLE = {On the monoidal center of {D}eligne's category {$\underline
              {\rm Re}{\rm p}(S_t)$}},
   JOURNAL = {J. Lond. Math. Soc. (2)},
  FJOURNAL = {Journal of the London Mathematical Society. Second Series},
    VOLUME = {103},
      YEAR = {2021},
    NUMBER = {3},
     PAGES = {1153--1185},
      ISSN = {0024-6107,1469-7750},
   MRCLASS = {18M15 (05E16 57K14)},
  MRNUMBER = {4245833},
       DOI = {10.1112/jlms.12403},
       URL = {https://doi.org/10.1112/jlms.12403},
}

@incollection {Etingof-Ostrik,
    AUTHOR = {Etingof, Pavel and Ostrik, Victor},
     TITLE = {On semisimplification of tensor categories},
 BOOKTITLE = {Representation theory and algebraic geometry---a conference
              celebrating the birthdays of {S}asha {B}eilinson and {V}ictor
              {G}inzburg},
    SERIES = {Trends Math.},
     PAGES = {3--35},
 PUBLISHER = {Birkh\"auser/Springer, Cham},
      YEAR = {2022},
      ISBN = {978-3-030-82006-0; 978-3-030-82007-7},
   MRCLASS = {18M20 (18M25 20D20 20J15)},
  MRNUMBER = {4486913},
MRREVIEWER = {Johannes\ Flake},
       DOI = {10.1007/978-3-030-82007-7\_1},
       URL = {https://doi.org/10.1007/978-3-030-82007-7\_1},
}

@article {BS,
    AUTHOR = {Brundan, Jonathan and Stroppel, Catharina},
     TITLE = {Semi-infinite highest weight categories},
   JOURNAL = {Mem. Amer. Math. Soc.},
  FJOURNAL = {Memoirs of the American Mathematical Society},
    VOLUME = {293},
      YEAR = {2024},
    NUMBER = {1459},
     PAGES = {vii+152},
      ISSN = {0065-9266,1947-6221},
      ISBN = {978-1-4704-6783-8; 978-1-4704-7716-5},
   MRCLASS = {16-02 (08C20 16G10 17B10 17B67 18E10 20G05 20G42)},
  MRNUMBER = {4684337},
       DOI = {10.1090/memo/1459},
       URL = {https://doi.org/10.1090/memo/1459},
}

@article {CEOP,
    AUTHOR = {Coulembier, Kevin and Etingof, Pavel and Ostrik, Victor and
              Pauwels, Bregje},
     TITLE = {Monoidal abelian envelopes with a quotient property},
   JOURNAL = {J. Reine Angew. Math.},
  FJOURNAL = {Journal f\"ur die Reine und Angewandte Mathematik. [Crelle's
              Journal]},
    VOLUME = {794},
      YEAR = {2023},
     PAGES = {179--214},
      ISSN = {0075-4102,1435-5345},
   MRCLASS = {18M05 (14L17)},
  MRNUMBER = {4529414},
MRREVIEWER = {Johannes\ Flake},
       DOI = {10.1515/crelle-2022-0076},
       URL = {https://doi.org/10.1515/crelle-2022-0076},
}

@article{FG,
      title={Monoidal Ringel duality and monoidal highest weight envelopes}, 
      author={Johannes Flake and Jonathan Gruber},
      year={2026},
      eprint={arXiv:2512.19558},
      archivePrefix={arXiv},
      primaryClass={math.RT},
      url={https://arxiv.org/abs/2512.19558}, 
}

@article{HS,
      title={Oligomorphic groups and tensor categories}, 
      author={Nate Harman and Andrew Snowden},
      year={2024},
      eprint={arXiv:2204.04526},
      archivePrefix={arXiv},
      primaryClass={math.RT},
      url={https://arxiv.org/abs/2204.04526}, 
}

@article{Sno,
      title={Regular categories, oligomorphic monoids, and tensor categories}, 
      author={Andrew Snowden},
      year={2024},
      eprint={arXiv:2403.16267},
      archivePrefix={arXiv},
      primaryClass={math.RT},
      url={https://arxiv.org/abs/2403.16267}, 
}

@article {Mori,
    AUTHOR = {Mori, Masaki},
     TITLE = {On representation categories of wreath products in
              non-integral rank},
   JOURNAL = {Adv. Math.},
  FJOURNAL = {Advances in Mathematics},
    VOLUME = {231},
      YEAR = {2012},
    NUMBER = {1},
     PAGES = {1--42},
      ISSN = {0001-8708,1090-2082},
   MRCLASS = {18D10},
  MRNUMBER = {2935382},
MRREVIEWER = {Eric\ C.\ Rowell},
       DOI = {10.1016/j.aim.2012.05.002},
       URL = {https://doi.org/10.1016/j.aim.2012.05.002},
}

@article{AbEnv2,
      title={Restriction, induction, and abelian envelopes for interpolation and cobordism categories}, 
      author={Johannes Flake and Robert Laugwitz and Sebastian Posur},
      year={2026?},
      note={In preparation}
}

@article {LS,
    AUTHOR = {Nyobe Likeng, Samuel and Savage, Alistair},
     TITLE = {Group partition categories},
   JOURNAL = {J. Comb. Algebra},
  FJOURNAL = {Journal of Combinatorial Algebra},
    VOLUME = {5},
      YEAR = {2021},
    NUMBER = {4},
     PAGES = {369--406},
      ISSN = {2415-6302,2415-6310},
   MRCLASS = {18M30 (20E22 20J15)},
  MRNUMBER = {4344790},
MRREVIEWER = {M.\ M.\ Al-Shomrani},
       DOI = {10.4171/jca/55},
       URL = {https://doi.org/10.4171/jca/55},
}

@article {Harman1,
    AUTHOR = {Harman, Nate},
     TITLE = {Generators for the representation rings of certain wreath
              products},
   JOURNAL = {J. Algebra},
  FJOURNAL = {Journal of Algebra},
    VOLUME = {445},
      YEAR = {2016},
     PAGES = {125--135},
      ISSN = {0021-8693,1090-266X},
   MRCLASS = {20C15 (20C30)},
  MRNUMBER = {3418050},
MRREVIEWER = {Lucio\ Centrone},
       DOI = {10.1016/j.jalgebra.2015.09.003},
       URL = {https://doi.org/10.1016/j.jalgebra.2015.09.003},
}

@article {Ryba1,
    AUTHOR = {Ryba, Christopher},
     TITLE = {Stable {G}rothendieck rings of wreath product categories},
   JOURNAL = {J. Algebraic Combin.},
  FJOURNAL = {Journal of Algebraic Combinatorics. An International Journal},
    VOLUME = {49},
      YEAR = {2019},
    NUMBER = {3},
     PAGES = {267--307},
      ISSN = {0925-9899,1572-9192},
   MRCLASS = {19A99 (16E20 18M05)},
  MRNUMBER = {3945269},
MRREVIEWER = {Andrea\ Solotar},
       DOI = {10.1007/s10801-018-0856-9},
       URL = {https://doi.org/10.1007/s10801-018-0856-9},
}

@article {Ryba2,
    AUTHOR = {Ryba, Christopher},
     TITLE = {The structure of the {G}rothendieck rings of wreath product
              {D}eligne categories and their generalisations},
   JOURNAL = {Int. Math. Res. Not. IMRN},
  FJOURNAL = {International Mathematics Research Notices. IMRN},
      YEAR = {2021},
    NUMBER = {16},
     PAGES = {12420--12462},
      ISSN = {1073-7928,1687-0247},
   MRCLASS = {18M15 (19A22)},
  MRNUMBER = {4300230},
MRREVIEWER = {Markus\ Szymik},
       DOI = {10.1093/imrn/rnz144},
       URL = {https://doi.org/10.1093/imrn/rnz144},
}

@article {Freslon-Skalski,
    AUTHOR = {Freslon, Amaury and Skalski, Adam},
     TITLE = {Wreath products of finite groups by quantum groups},
   JOURNAL = {J. Noncommut. Geom.},
  FJOURNAL = {Journal of Noncommutative Geometry},
    VOLUME = {12},
      YEAR = {2018},
    NUMBER = {1},
     PAGES = {29--68},
      ISSN = {1661-6952,1661-6960},
   MRCLASS = {20E22 (05A18 05E10 20G42)},
  MRNUMBER = {3782053},
MRREVIEWER = {Shilin\ Yang},
       DOI = {10.4171/JNCG/270},
       URL = {https://doi.org/10.4171/JNCG/270},
}

@article {CO-ab,
    AUTHOR = {Comes, Jonathan and Ostrik, Victor},
     TITLE = {On {D}eligne's category {$\underline{\rm Re}{\rm
              p}^{ab}(S_d)$}},
   JOURNAL = {Algebra Number Theory},
  FJOURNAL = {Algebra \& Number Theory},
    VOLUME = {8},
      YEAR = {2014},
    NUMBER = {2},
     PAGES = {473--496},
      ISSN = {1937-0652,1944-7833},
   MRCLASS = {18D10},
  MRNUMBER = {3212864},
MRREVIEWER = {Alessandro\ Ardizzoni},
       DOI = {10.2140/ant.2014.8.473},
       URL = {https://doi.org/10.2140/ant.2014.8.473},
}

@article{CEAH,
 author = {Coulembier, Kevin and Entova-Aizenbud, Inna and Heidersdorf, Thorsten},
 title = {Monoidal abelian envelopes and a conjecture of {Benson} and {Etingof}},
 fjournal = {Algebra \& Number Theory},
 journal = {Algebra Number Theory},
 issn = {1937-0652},
 volume = {16},
 number = {9},
 pages = {2099--2117},
 year = {2022},
 language = {English},
 doi = {10.2140/ant.2022.16.2099},
 zbMATH = {7640151},
 Zbl = {1503.18006}
}

@article{Cou-ab,
 author = {Coulembier, Kevin},
 title = {Monoidal abelian envelopes},
 fjournal = {Compositio Mathematica},
 journal = {Compos. Math.},
 issn = {0010-437X},
 volume = {157},
 number = {7},
 pages = {1584--1609},
 year = {2021},
 language = {English},
 doi = {10.1112/S0010437X21007399},
 zbMATH = {7374902},
 Zbl = {1471.18020}
}

@article{HT,
 author = {Heidersdorf, Th. and Tyriard, G.},
 title = {On interpolation categories for the hyperoctahedral group},
 fjournal = {Algebras and Representation Theory},
 journal = {Algebr. Represent. Theory},
 issn = {1386-923X},
 volume = {28},
 number = {2},
 pages = {613--646},
 year = {2025},
 language = {English},
 doi = {10.1007/s10468-025-10331-y},
 zbMATH = {8053883},
 Zbl = {1570.17014}
}

@article{BW,
 author = {Barrett, John W. and Westbury, Bruce W.},
 title = {Spherical categories},
 fjournal = {Advances in Mathematics},
 journal = {Adv. Math.},
 issn = {0001-8708},
 volume = {143},
 number = {2},
 pages = {357--375},
 year = {1999},
 language = {English},
 doi = {10.1006/aima.1998.1800},
 zbMATH = {1295253},
 Zbl = {0930.18004}
}

@article{AK,
 author = {Andr{\'e}, Yves and Kahn, Bruno},
 title = {Nilpotence, radicals and monoidal structures. {With} an appendix by {Peter} {O}'{Sullivan}.},
 fjournal = {Rendiconti del Seminario Matematico della Universit{\`a} di Padova},
 journal = {Rend. Semin. Mat. Univ. Padova},
 issn = {0041-8994},
 volume = {108},
 pages = {107--291},
 year = {2002},
 language = {French},
 url = {https://eudml.org/doc/108589},
 zbMATH = {5019695},
 Zbl = {1165.18300}
}

@article {HK,
    AUTHOR = {Harman, Nate and Kalinov, Daniil},
     TITLE = {Classification of simple algebras in the {D}eligne category
              {${\rm Rep}(S_t)$}},
   JOURNAL = {J. Algebra},
  FJOURNAL = {Journal of Algebra},
    VOLUME = {549},
      YEAR = {2020},
     PAGES = {215--248},
      ISSN = {0021-8693,1090-266X},
   MRCLASS = {17B10 (03C20 16B50 18B99)},
  MRNUMBER = {4050674},
MRREVIEWER = {Xiao-Wu\ Chen},
       DOI = {10.1016/j.jalgebra.2019.12.010},
       URL = {https://doi.org/10.1016/j.jalgebra.2019.12.010},
}
\bibliographystyle{plain}
\end{document}